\newtheorem{thm}{Theorem}[section]
\newtheorem{prop}[thm]{Proposition}
\newtheorem{lem}[thm]{Lemma}
\newtheorem{cor}[thm]{Corollary}
\newtheorem{definition}[thm]{Definition}
\newtheorem{remark}[thm]{Remark}
\newenvironment{proof}[1][,]{\medskip\ifcat,#1
\noindent{{\it Proof}:\ \ }\else\noindent{\it Proof of #1.\ }\fi}
{\hfill$\square$\medskip}
\DeclareMathOperator\vol{vol}
\DeclareMathOperator\GL{GL}
\DeclareMathOperator\gl{gl}
\DeclareMathOperator\Ree{Re}
\DeclareMathOperator\Imm{Im}
\DeclareMathOperator\Len{\mathit{l}}
\DeclareMathOperator\D{\mathcal{D}}
\def\C{\mathbb{C}}
\def\Sph{\mathbb{S}}
\def\S{\mathcal{S}}
\def\R{\mathbb{R}}
\def\Z{\mathbb{Z}}
\begin{document}

\title{Extremal length in higher dimensions and complex systolic inequalities}

\author{Tommaso Pacini\footnote{University of Torino, Italy,
\texttt{tommaso.pacini@unito.it}}}


\maketitle

\begin{abstract} Extremal length is a classical tool in 1-dimensional complex analysis for building conformal invariants. We propose a higher-dimensional generalization for complex manifolds and provide some ideas on how to estimate and calculate it. We also show how to formulate certain natural geometric inequalities concerning moduli spaces in terms of a complex analogue of the classical Riemannian notion of systole. 
\end{abstract}

\section{Introduction}\label{s:intro}

Let $\D\subseteq\C$ be an open domain. In the 1940s, Ahlfors and Beurling (building upon previous ideas of Gr\"otzsch) proposed the following method for constructing conformal invariants. Choose a set of rectifiable curves $\Gamma$ in $\mathcal{D}$ (with boundary in $\overline\D$). Given any positive function $\rho:\D\rightarrow\R$, \textit{i.e.} conformal metric $\rho^2\cdot g_{std}$ on $\D$, set 
\begin{equation*}
\Len(\Gamma,\rho):=\inf_{\gamma\in \Gamma} \int_\gamma \rho\,|dz|, \ \ A(\rho):=\int_{\D} \rho^2 dxdy
\end{equation*}
then define the \textit{extremal length} of $\Gamma$ as
\begin{equation}\label{eq:extremallength}
 \mu_\Gamma:=\sup_{\rho}\frac{(\Len(\Gamma,\rho))^2}{A(\rho)}.
\end{equation}
Here, we restrict our attention to those ``admissible'' $\rho$ which induce a finite, positive area $A(\rho)$.

The result is a number in the interval $[0,\infty]$, but the possibility of extreme values is reduced by the choice of quotient $\Len^2/A$, concocted to be scale invariant: given a constant $c>0$, it does not distinguish between $\rho$ and $c\rho$. Since the set of conformal metrics is preserved under biholomorphisms $\phi:\D\rightarrow \D'$, one automatically obtains $\mu_\Gamma=\mu_{\Gamma'}$ when $\Gamma'=\phi(\Gamma)$. More generally, if we allow $\rho$ to have zeros we obtain $\mu_\Gamma\leq \mu_{\Gamma'}$ for any $\phi$ holomorphic.

These invariants allow one to classify and distinguish not only domains, \textit{e.g.} annuli, but also domains plus certain configurations of internal or boundary points, \textit{e.g.} quadrilaterals, incorporated through judicious choices of $\Gamma$. We refer to \cite{Ahlfors} for details and examples. In this regard we emphasize that, given the dependence on the class $\Gamma$, the natural context in which to apply this invariant is when it is a priori possible to understand how such classes change under the action of diffeomorphisms or maps. This might be achieved by choosing $\Gamma$ to be a homology class, or by tracking the configuration of boundary points that define $\Gamma$. Another application arises when studying holomorphic structures, already known to be different, on the same domain. In this case we can fix $\Gamma$ once and for all; the invariant then furnishes a parameter on the moduli space.

More generally, across the years, the concept of extremal length has found a wide variety of applications. We again refer to \cite{Ahlfors} for further details.

\paragraph{Extremal volume.}It is an interesting question to find an analogous ``extremal volume'' in higher dimensions. One quickly realizes that addressing this question requires imposing, a priori, a strong subjective viewpoint on the whole theory. In dimension 1, complex analysis is intimately intertwined with conformal geometry. Extremal length relies on this ambiguity by measuring lengths via metrics which are introduced using conformal factors governed by the underlying complex structure. In higher dimensions there is no such relationship. One must thus make a choice whether to prefer the complex-analytic or metric viewpoint, each at the expense of the other. Our proposal is based on the following \textit{Ans\"atze}:
\begin{itemize}
 \item The new construction should reduce to the classical one for domains in $\C$.
 \item In $n$ complex dimensions, curves should be replaced by submanifolds of real dimension $n$.
 \item The new theory should be of a purely complex-analytic nature. 
\end{itemize}
The first two conditions are (arguably!) uncontroversial. We achieve the third by replacing conformal metrics with complex volume forms. Concerning this point, an obvious metric-oriented alternative might be to work in terms of K\"ahler metrics, thus adopting a strongly Riemannian, or perhaps symplectic, viewpoint. We remark that some developments of extremal length, such as the theory of quasi-conformal mappings, have been extended in higher dimensions by adopting a metric point of view and completely dropping the complex structure, cf. \cite{HeinonenKoskela}. One of our aims, however, cf. Sections \ref{s:extremalvolume} and \ref{s:estimates}, is to show that the geometry of real vs. complex volume forms is sufficiently rich to generate an interesting, purely complex-analytic theory, even without the use of metrics.

The result of our construction is a holomorphic invariant which depends only the submanifold geometry of the ambient space, cf. Definition \ref{def:extremalvol}.

\paragraph{Further features.}We wish to emphasize two more aspects of this construction.
\begin{itemize}
\item Roughly speaking, our invariant depends only on the space of ``totally real" submanifolds, paying no attention to the more usual complex submanifolds. It seems to us that totally real geometry encodes the complex structure in a different way, and it seems worthwhile to further develop its role within complex analysis. 
\item An alternative way of describing our construction, which better underlines the interplay between differential geometry and complex analysis, is the following. It is the result of looking at the well-known geometry of Calabi-Yau manifolds and special Lagrangian submanifolds \cite{HL} and stripping away all metric and symplectic information, so as to expose its purely holomorphic backbone. We then call upon the idea of extremal length to repackage these ingredients in the form of invariants. The resulting construction applies to any complex manifold.
\end{itemize}

\paragraph{Calculations.}Of course, it is important that these invariants be calculable. Recall the situation in dimension 1: the modulus is defined for any domain $\mathcal{D}\subseteq\C$ and any $\Gamma$, but in general one can only hope to approximate or bound its value. In order to calculate it precisely, it is usually necessary to first apply the Riemann mapping theorem, bringing $\mathcal{D}$ into some ``standard form'', then use special properties of this standard form to perform the calculations.

In higher dimensions there is no analogue of the Riemann mapping theorem. Whatever the holomorphic invariant, the best one can thus probably hope for is to calculate it in the case of manifolds with some special structure. In Section \ref{s:extremal} we describe a model situation in which this is possible for our invariant, cf. Theorem \ref{thm:calculate}. In Section \ref{s:first_examples} we test this result on Reinhardt domains and elliptic fibre bundles, also providing some comparisons with Calabi-Yau and special Lagrangian geometry. In Section \ref{s:example} we calculate the extremal volume of complex tori with respect to any class of submanifolds defined by a homology class.

\paragraph{Complex systolic inequalities.}Working with tori and their moduli spaces suggests the following development. One of the simplest applications of the concept of extremal length concerns geometric inequalities: the classical example is a theorem of Loewner from 1949, concerning the relationship between geodesics and area on Riemannian tori, which has a quick proof in terms of extremal length. A more modern formulation of such inequalities is in terms of systolic geometry. In complete analogy, in Section \ref{s:cpx_systole} we show that our concept of extremal volume triggers a notion of complex systoles, cf. Definition \ref{def:complexsystole}. Theorem \ref{thm:tori_systolicineq} shows how a certain geometric feature of the moduli space of polarized complex tori can be expressed in terms of a complex systolic inequality.

\paragraph{Acknowledgements.} The question of how to define a higher-dimensional analogue of extremal length was mentioned to me by Eric Bedford. The notion presented here rests upon previous work on totally real submanifolds joint with Jason Lotay. While thinking about geometric inequalities related to extremal volume and Theorem \ref{thm:tori_systolicineq}, I came across \cite{fan}, which defines a more restrictive notion of systoles for Calabi-Yau manifolds,  and \cite{haiden}, which proves essentially the same result as Theorem \ref{thm:tori_systolicineq}, but with no relation to extremal length and with a focus on symplectic rather than complex geometry. I thus realized that systolic geometry provides a natural context for such inequalities. Finally, I wish to thank the organizers and participants of the conference ``Complex Analysis and Geometry - XXIV'' in Levico Terme, Italy, where I first presented these ideas, for interesting conversations.

\section{Extremal volume}\label{s:extremalvolume}

Let $(M,J)$ be a complex manifold of complex dimension $n$. Let $K_M$ denote the holomorphic line bundle of differential forms of type $(n,0)$ and let $\Omega$ be any smooth section of $K_M$. We then obtain the following data:

\begin{itemize}
\item Using complex conjugation we can construct the real $2n$-form
\begin{equation*}
\Omega_M:=(-1)^{\frac{n(n-1)}{2}}\left(\frac{i}{2}\right)^n\,\Omega\wedge\bar\Omega\in\Lambda^{2n}(M;\R). 
\end{equation*}
In local holomorphic coordinates, $\Omega=f(z,\bar z)\,dz^1\wedge\dots\wedge dz^n$ and $\Omega_M=|f|^2 dx^1\wedge dy^1\wedge\dots\wedge dx^n\wedge dy^n$. If $\Omega$ is nowhere vanishing, $K_M$ is differentiably trivial and $\Omega_M$ is a real volume form on $M$, compatible with the standard orientation on $M$ induced by $J$.
\item Let $\pi\leq T_pM$ be an oriented plane of real dimension $n$. To define a $n$-form on $\pi$ it suffices to define its value on a positive basis $v_1,\dots, v_n$: the rest follows from multi-linearity. Taking the norm of the value of $\Omega$ we thus define a real $n$-form on $\pi$ as follows:
\begin{equation*}
 \Omega_\pi(v_1,\dots,v_n):=|\Omega[p](v_1,\dots,v_n)|.
\end{equation*}
It vanishes in two cases: either when $\Omega[p]=0$, or when $v_1,\dots,v_n$ are not $\C$-linearly independent, \textit{i.e.} $\pi$ contains a complex line.
\end{itemize}

Given an oriented submanifold $L\subseteq M$ (possibly with boundary) of real dimension $n$, we obtain a $n$-form $\Omega_L$ on $L$ by setting $\Omega_L[p]:=\Omega_\pi$ where $\pi=T_pL$, for any $p\in L$. In general it is a $C^0$-section of $\Lambda^n(L;\R)$. 

\begin{definition} We call $A(\Omega):=\int_M\Omega_M\geq 0$ the $\Omega$-volume of $M$. We call $\int_L\Omega_L\geq 0$ the $\Omega$-volume of $L$, and $L\mapsto\int_L\Omega_L$ the $\Omega$-\textit{volume} functional. 

Given a set $\Lambda$ of oriented submanifolds $L\subseteq M$ (possibly with boundary) of real dimension $n$, we let $\Len(\Lambda,\Omega):=\inf_{L\in \Lambda} \int_L\Omega_L$ denote the infimum value of the $\Omega$-volume functional restricted to $\Lambda$.
\end{definition}

Notice that any function $e^{i\theta}:M\rightarrow \Sph^1$ defines a ``rotated" complex form $\Omega':=e^{i\theta}\Omega$: we will say that $\Omega$, $\Omega'$ are \textit{equivalent}. The above constructions do not detect the difference between these forms: $\Omega_M=\Omega_M'$ and $\Omega_\pi=\Omega_\pi'$. In particular, the $\Omega$-volume depends only on the equivalence class of $\Omega$. 

We can now present our concept of extremal volume. The quantity $\Len^2/A$, defined above, is an invariant of the triple $(M,\Omega,\Lambda)$. It is also invariant under rescalings and rotations of $\Omega$. To obtain an invariant depending only on $(M,\Lambda)$ we adopt the strategy used for extremal length.

\begin{definition}\label{def:extremalvol}
We define the \textit{extremal volume} of $\Lambda$ as
\begin{equation*}
 \mu_\Lambda:=\sup_{\Omega}\frac{(\Len(\Lambda,\Omega))^2}{A(\Omega)},
\end{equation*}
where we restrict our attention to those ``admissible" $\Omega$ such that $0<A(\Omega)<\infty$.
\end{definition}

Given any biholomorphism $\phi:M\rightarrow M'$, choosing $\Lambda'=\{\phi(L):L\in\Lambda\}$ it is clear that $\mu_\Lambda=\mu_{\Lambda'}$.

We must check that the new invariant coincides with the classical one in the case where $M=\D$ is a domain in $\C$. In this case any $\Omega$ may be written as $\Omega=f\,dz$ for some $f:\D\rightarrow\C$ and $\int_\gamma\Omega_L=\int_\gamma|f||dz|$: writing $\rho=|f|$, it follows that there is no difference between the admissible $\Omega$ used to define extremal volume and the admissible, non-negative, $\rho$ used to define extremal length. 

\paragraph{Example.}All definitions extend to sets $\Lambda$ of rectifiable $n$-currents in $M$. Recall that any homology class $\alpha$ in $M$ can be represented by rectifiable currents (but not necessarily by smooth submanifolds). We will be particularly interested in the case where $\Lambda$ is the set of rectifiable currents in $\alpha\in H_n(M;\Z)$ or in $\alpha\in H_n(M;\R)$. The corresponding extremal volume will be denoted $\mu_\alpha$. 

\ 

From now on we will not distinguish between submanifolds and currents.

\begin{remark}\label{rem:anti_holomorphic}
If we replace $J$ with $-J$, the orientation on $M$ changes by $(-1)^n$. Also, $(n,0)$-forms are swapped with $(0,n)$-forms and $\overline\Omega_M=(-1)^{n^2}\Omega_M$, where the LHS is the $\overline\Omega$-volume form on $(M,-J)$ and the RHS is the $\Omega$-volume form on $(M,J)$. It follows that the $\Omega$-volume of $(M,J)$ coincides with the $\overline\Omega$-volume of $(M,-J)$. The orientation on $L$ is independent of that on $M$, and the $\Omega$-volume of $L$ does not notice the difference between $\Omega$, $\overline\Omega$. We conclude that extremal volume is invariant also under anti-biholomorphisms.

Let $-L$ denote $L$ with the opposite orientation. Then $\Omega_{-L}=-\Omega_L$ but each is positive with respect to its own orientation so the $\Omega$-volume of $L$ is independent of the orientation of $L$. Analogously, given $\Lambda$, let $-\Lambda$ denote the same set of submanifolds, each endowed with the opposite orientation. Then $\mu_\Lambda=\mu_{-\Lambda}$.
\end{remark}

\section{Lower bounds for the extremal volume}\label{s:estimates}

We have mentioned that the notion of $\Omega$-volume, thus of extremal volume,  depends only on the equivalence class of $\Omega$, defined in terms of rotations.

However, the choice of $\Omega$ also allows us to ``organize" $n$-planes in $TM$, defining a ``Grassmannian geometry" specifically sensitive to rotations. Understanding this point will sometimes allow us to obtain lower bounds for the extremal volume. 

Let us denote by $\mathcal{G}$ the Grassmannian of non-oriented $n$-planes in $TM$, and by $\widetilde{\mathcal{G}}$ the Grassmannian of oriented $n$-planes.

\begin{definition} Let $(M^{2n},J)$ be a complex manifold. Fix a smooth section $\Omega$ of $K_M$. An oriented $n$-plane $\pi$ is $\Omega$-\textit{special} if $\Omega_\pi=\Omega$ (restricted to $\pi$). We will denote by $\mathcal{S}_\Omega\subseteq\widetilde{\mathcal{G}}$ the Grassmannian of $\Omega$-special planes in $TM$. 

An oriented submanifold $L^n\subseteq M$ is $\Omega$-\textit{special} if each $T_pL\in\mathcal{S}_\Omega$, \textit{i.e.} $\Omega_L\equiv\Omega$ (restricted to $TL$). 
\end{definition}

A $n$-plane is thus $\Omega$-special (for some orientation) if $\Omega$, on that plane, takes real values, \textit{i.e.} its imaginary part vanishes. Let us look into this more closely.

If $\Omega[p]=0$, any $n$-plane at that point is special with respect to any orientation, \textit{i.e.} $\mathcal{S}_\Omega[p]=\widetilde{\mathcal{G}}[p]$. If a plane at $p$ contains complex lines, it is special with respect to any orientation and any $\Omega$, so it belongs to $\mathcal{S}_\Omega[p]$ for all $\Omega$. 

The special condition is thus of interest mainly in the case when $\Omega[p]\neq 0$ and the oriented $n$-plane is totally real (TR), \textit{i.e.} contains no complex lines. We can then define a \textit{phase} $e^{i\theta}\in \S^1$ such that $\Omega_\pi=e^{i\theta}\Omega$ and $\pi$ is special if and only if $e^{i\theta}=1$: we say it is $\Omega$-\textit{special totally real} (STR). 

Concerning oriented submanifolds, and assuming $\Omega$ never vanishes, we thus notice two interesting situations at opposite extremes of the geometric spectrum.

On the one hand, assume $L$ is complex (thus $n$ is even). It is then special for any $\Omega$ and $\Omega_L\equiv 0$, so $\int_L\Omega_L=0$. In particular, for any $\Omega$, a complex submanifold minimizes the $\Omega_L$-volume when compared to any other oriented $n$-submanifold. The same happens for any $L$ whose tangent bundle contains complex lines at each point.

On the other hand, assume $L$ is totally real. We can then define a \textit{phase function} $e^{i\theta}:L\rightarrow \Sph^1$ such that $\Omega_L=e^{i\theta}\Omega$. $L$ is $\Omega$-STR if the phase function satisfies $e^{i\theta}\equiv 1$.

The latter situation extends the following well-known setup, cf. \cite{HL}. Recall that a \textit{Calabi-Yau manifold} is a complex manifold $(M,J,g,\Omega)$ where $g$ is a K\"ahler Ricci-flat metric and $\Omega$ is a parallel (thus holomorphic) nowhere-vanishing complex volume form. In this case submanifolds which are \textit{special Lagrangian}, \textit{i.e.} simultaneously special and Lagrangian (thus TR) are a classical object of interest because they are ``calibrated'', thus volume-miminizing in their homology class.

We can extend this result to our non-metric context, also allowing non-TR points. We remark that the following result might not seem credible until one notices that any closed $(n,0)$-form is automatically holomorphic: this rigidifies $\Omega$ considerably so that, as a section of $K_M$, it is uniquely defined by its values on any open subset of $M$ or indeed on any open subset of a TR submanifold.

\begin{prop}\label{prop:special_minimizes}
 Let $\Omega$ be closed, equivalently holomorphic. Then any $\Omega$-special submanifold $L$ minimizes the $\Omega$-volume functional within its homology class $\alpha$. In particular, $\Len(\alpha,\Omega)=\int_L\Omega$. 

Setting $\Lambda:=\alpha$, this implies the lower bound
 $$\mu_\alpha\geq(\int_L\Omega)^2/\int_M\Omega_M.$$
\end{prop}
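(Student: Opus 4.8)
The plan is to run a calibration-type argument in which the closed real $n$-form $\Ree\Omega$ plays the role of the calibration and $\Omega_L$ plays the role of the comparison volume. Everything reduces to one pointwise inequality between these two forms, combined with the homological invariance of $\int\Ree\Omega$.

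First I would record the key pointwise inequality: for every oriented $n$-plane $\pi\leq T_pM$ one has $\Ree\Omega|_\pi\leq\Omega_\pi$, by which I mean that, evaluated on any positive basis of $\pi$, the former is no larger than the latter. I would verify this in the three regimes already isolated in the text. If $\Omega[p]=0$ or $\pi$ contains a complex line, then both forms vanish and there is nothing to prove. Otherwise $\pi$ is totally real with $\Omega[p]\neq0$, so there is a phase $e^{i\theta}$ with $\Omega|_\pi=e^{i\theta}\Omega_\pi$, where $\Omega_\pi$ is positive on positive bases; taking real parts gives $\Ree\Omega|_\pi=\cos\theta\,\Omega_\pi\leq\Omega_\pi$, with equality exactly when $\theta=0$, i.e.\ exactly when $\pi\in\S_\Omega$. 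This is simply the scalar fact $\Ree c\leq|c|$ promoted to a comparison of $n$-forms, and it is the only place where the orientation conventions must be handled with care.

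Next I would invoke that $\Omega$ closed implies $\Ree\Omega$ closed: since $d$ has real coefficients, $d\Omega=0$ forces $d\Ree\Omega=0$ (the equivalence with holomorphicity being the remark preceding the statement). For the special submanifold $L$ itself, the definition gives $\Omega_L\equiv\Omega|_{TL}$, and since $\Omega$ is real on $TL$ this coincides with $\Ree\Omega|_{TL}$, so $\int_L\Omega_L=\int_L\Ree\Omega$. For any competitor $L'$ in the class $\alpha$, integrating the pointwise inequality yields $\int_{L'}\Omega_{L'}\geq\int_{L'}\Ree\Omega$, while closedness of $\Ree\Omega$ together with $[L']=[L]=\alpha$ gives $\int_{L'}\Ree\Omega=\int_L\Ree\Omega$ by Stokes/de Rham. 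Chaining these gives $\int_{L'}\Omega_{L'}\geq\int_L\Omega_L$, so $L$ minimizes the $\Omega$-volume within $\alpha$ and $\Len(\alpha,\Omega)=\int_L\Omega$. Taking $\Lambda=\alpha$ and feeding this one particular $\Omega$ into the supremum of Definition \ref{def:extremalvol} produces $\mu_\alpha\geq(\int_L\Omega)^2/\int_M\Omega_M$.

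Since the pointwise estimate is elementary, I expect the genuine obstacle to be legitimizing the two integral identities for the class of competitors actually allowed: $\alpha$ may only be representable by rectifiable currents, not smooth submanifolds. One must check that $\int_{T}\Omega_T\geq\langle T,\Ree\Omega\rangle$ for a rectifiable current $T$, by applying the plane-wise inequality to the approximate tangent planes weighted by the mass measure, and that the pairing $\langle T,\Ree\Omega\rangle$ depends only on $[T]=\alpha$ through the standard duality between currents and closed forms. Granting the extension to currents already announced in the text, both reduce to the linear-algebraic inequality above plus this pairing, and the claimed bound follows.
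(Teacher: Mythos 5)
Your proof is correct and takes essentially the same route as the paper, which runs the identical calibration argument at the integral level: $\int_L\Omega_L=\int_L\Omega=\int_{L'}\Omega=\int_{L'}\Ree(\Omega)\leq\int_{L'}|\Ree(\Omega)|\leq\int_{L'}|\Omega|=\int_{L'}\Omega_{L'}$, which is exactly your pointwise inequality $\Ree\Omega|_\pi\leq\Omega_\pi$ integrated over $L'$, together with the same homological invariance of the closed form and the same characterization of equality. Your extra paragraph on rectifiable currents merely makes explicit what the paper suppresses by declaring that it will not distinguish submanifolds from currents.
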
 
\begin{proof}
Let $L'$ be any oriented submanifold homologous to $L$. Using the fact that all integrals are real, we obtain
\begin{align*}
 \int_L\Omega_L&=\int_L\Omega=\int_{L'}\Omega=\int_{L'}\Ree(\Omega)+i\int_{L'}\Imm(\Omega)\\
 &=\int_{L'}\Ree(\Omega)\leq\int_{L'}|\Ree(\Omega)|\leq\int_{L'}|\Omega|=\int_{L'}\Omega_{L'}.
\end{align*}
We note that equality holds exactly when $L'$ is also $\Omega$-special.

To compute $\mu_\alpha$ we must consider all admissible forms $\Omega'$ but, since $\mu_\alpha$ is scale invariant, we can normalize them so that $A(\Omega')=A(\Omega)$. The lower bound is then immediate.
\end{proof}

Recall that pluri-potential theory shows that, for any $L$ TR, there exists a small neighbourhood of $L\subseteq M$ which is Stein: in particular, this neighbourhood contains no compact complex $n$-submanifolds. If $L$ is STR then the above proposition proves that the homology class of $L$ contains no such submanifolds. More generally, the same is true for any special submanifold with positive $\Omega_L$-volume.

\begin{remark} 
When $M$ is Calabi-Yau and $L$ is special Lagrangian, $\Omega_{|TL}=\vol_L$, the induced Riemannian volume form on $L$. It follows that $\int_L\Omega_L=\int_L\Omega=\int_L\vol_L$, \textit{i.e.} the $\Omega$-volume of $L$ coincides with the Riemannian volume.
\end{remark}

In order to emphasize the flexibility of our setting, we note that there exists an analogous result for certain classes $\Lambda$ of submanifolds with boundary.
\begin{prop}\label{prop:bdy_special_minimizes}
Let $\Sigma$ be a complex submanifold in $M$. Choose a relative homology class $\alpha\in H_n(M,\Sigma;\Z)$ and let $\Lambda$ denote the class of integral currents in $\alpha$.

Let $\Omega$ be closed, equivalently holomorphic. Then any $\Omega$-special submanifold $L\in\Lambda$ minimizes the $\Omega$-volume functional restricted to $\Lambda$. In particular, $\Len(\Lambda,\Omega)=\int_L\Omega$. This implies the lower bound
 $$\mu_\Lambda\geq(\int_L\Omega)^2/\int_M\Omega_M.$$
\end{prop}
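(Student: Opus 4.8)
The plan is to follow the proof of Proposition \ref{prop:special_minimizes} almost verbatim, the only new ingredient being a boundary term that must be shown to vanish. As before, let $L'$ be any competitor in $\Lambda$, \textit{i.e.} any integral current representing $\alpha$. The heart of the matter is again to prove that $\int_{L'}\Omega=\int_L\Omega$; granting this, since $L$ is $\Omega$-special the common value $\int_L\Omega=\int_L\Omega_L$ is real, so $\int_{L'}\Omega=\int_{L'}\Ree(\Omega)$, and the chain of pointwise estimates $\int_{L'}\Ree(\Omega)\leq\int_{L'}|\Ree(\Omega)|\leq\int_{L'}|\Omega|=\int_{L'}\Omega_{L'}$ closes the argument, with equality exactly when $L'$ is also $\Omega$-special.

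To compute $\int_{L'}\Omega-\int_L\Omega$ I would unwind the definition of relative homology at the level of integral currents: $L$ and $L'$ represent the same class in $H_n(M,\Sigma;\Z)$ precisely when $L-L'=\partial W + S$ for some integral $(n+1)$-current $W$ in $M$ and some integral $n$-current $S$ supported in $\Sigma$. Hence $\int_{L-L'}\Omega=\int_{\partial W}\Omega+\int_S\Omega$. The first term vanishes by Stokes' theorem for currents, exactly as in the absolute case, using that $\Omega$ is closed; this is where the hypothesis $d\Omega=0$ enters.

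The genuinely new step, and the one I expect to carry the weight of the argument, is to show that the second term $\int_S\Omega$ vanishes. This is where the hypothesis that $\Sigma$ be a \emph{complex} submanifold is essential. Since $\Sigma\subsetneq M$ is a complex submanifold of a manifold of complex dimension $n$, its complex dimension is at most $n-1$. The inclusion $\iota:\Sigma\hookrightarrow M$ is holomorphic, so pullback preserves bidegree and $\iota^*\Omega$ is a section of $\Lambda^{n,0}T^*\Sigma$. But this bundle is zero, because $n$ strictly exceeds the complex dimension of $\Sigma$. Therefore $\iota^*\Omega\equiv 0$ and $\int_S\Omega=\int_S\iota^*\Omega=0$ for every $n$-current $S$ supported in $\Sigma$, regardless of its dimension or regularity.

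Combining the two vanishing statements gives $\int_{L'}\Omega=\int_L\Omega$, and the pointwise inequalities above complete the proof; the bound on $\mu_\Lambda$ then follows immediately from $\Len(\Lambda,\Omega)=\int_L\Omega$ and the definition of extremal volume. The only point requiring mild care is the validity of Stokes' theorem and of the relative-cycle decomposition for integral currents rather than smooth chains, but this is standard in geometric measure theory and causes no difficulty.
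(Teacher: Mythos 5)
Your proposal is correct and follows exactly the paper's argument: the paper likewise decomposes $L-L'=\partial(T^{n+1})+S$ with $S$ an integral rectifiable $n$-current in $\Sigma$, kills the boundary term via closedness of $\Omega$ as in Proposition \ref{prop:special_minimizes}, and discards $\int_S\Omega$ because $\Omega$ vanishes on $\Sigma$. Your only addition is to spell out why $\iota^*\Omega\equiv 0$ (bidegree reasons, since $\dim_\C\Sigma\leq n-1$), a detail the paper asserts without comment.
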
 
To prove this result, choose any $L'\in\Lambda$. Then $L-L'=\partial(T^{n+1})+S$, where $S$ is an integral $n$-current contained in $\Sigma$. Since $\Omega$ vanishes on $\Sigma$, the same proof used for Proposition \ref{prop:special_minimizes} applies.

We remark that if $\Lambda$ contains a TR submanifold $L'$, the assumption $\partial L'\subseteq\Sigma$ implies that $\Sigma$ must be a complex hypersurface.

\section{Calculation of the extremal volume}\label{s:extremal}

Proposition \ref{prop:special_minimizes} provides, under appropriate conditions, a cohomological lower bound for extremal volume. This bound depends on the complex structure. 

We now want to find situations in which it is possible to calculate the extremal volume precisely. As mentioned, this is an issue even for extremal length. One context in which the latter can be computed is the case of quadrilaterals, because the Riemann mapping theorem allows us to restrict to the special case of rectangles, which have the property of being fibred by segments parallel to their sides. We generalize this situation as follows.

Assume $M$ has the structure of a fibration over a $n$-dimensional smooth base manifold $B$, with generic fibre $L$. Assuming $L$ is oriented we can orient also $B$, as follows. Choose a nowhere-vanishing volume form $\vol_M$ on $M$, positive with respect to the orientation induced by $J$. Choose any $b\in B$ and let $L_b$ denote the corresponding fibre in $M$.  Let $w_1,\dots,w_n$ be a basis for $T_bB$. Any local trivialization of the fibration allows us to lift the vectors $w_i$ to $M$, obtaining vector fields $\tilde{w}_i$ defined along $L_b$ which project to $w_i$. Now choose any $p\in L_b$ and let $v_1,\dots,v_n$ be a positive basis for $T_pL_b$. We say that the basis $w_i$ is positive if $(-1)^{\frac{n(n-1)}{2}}\vol_M(\tilde{w}_1,\dots,\tilde{w}_n,v_1,\dots,v_n)>0$. One can check that this construction is independent of all choices.

We can build a $n$-form $\Omega_B$ on $B$ through the process of ``vertical integration'' applied to $\Omega_M$: using the above notation,
\begin{equation*}
 \Omega_B[b](w_1,\dots,w_n):=(-1)^{\frac{n(n-1)}{2}}\cdot\int_{L_b}\Omega_M(\tilde{w}_1,\dots,\tilde{w}_n,\cdot,\dots,\cdot)\in\R.
\end{equation*}
Different choices of lifting differ only by vectors in $TL_b$, but integrating over $L_b$ saturates these directions so $\Omega_B$ is well-defined independently of this choice. It is non-negative with respect to the above orientation of $B$, and has the property $\int_B\Omega_B=\int_M\Omega_M$.

\begin{remark}\label{rem:orientations}
The orientation adopted in this section is irrelevant for the other parts of the paper. It is chosen simply to be compatible with the basic example (using compact notation)
\begin{equation*}
 M=\R^{2n}\rightarrow B=\R^n, \ \ (x,y)\mapsto x,
\end{equation*}
where $M$ has the orientation induced from $J$ and $\Omega=(-i)^ndz$ so that each fibre, endowed with the standard orientation, is $\Omega$-STR. In this case $(-1)^{\frac{n(n-1)}{2}}\Omega_M=dx\wedge dy$, $B$ has the standard orientation and $\Omega_B[x](\partial x)=\int_{\R^n}dy$. 

In particular, assume $w_1,\dots,w_n$ is a positive basis for $T_bB$. Set $v_i:=Jw_i$. Then $\Omega(v_1,\dots,v_n)>0$.

Applying a biholomorphism to $M$ may invert the orientation on $L$, thus change the orientation on $B$. For example, if $M=\C^2$ the biholomorphism which exchanges the variables $z_1$ and $z_2$ on $M$ also exchanges the variables $x_1,x_2$ on the base $B$, thus changes its orientation.
\end{remark}

Now assume each fibre $L_b$ is TR. In this case the spaces $J(T_pL_b)$ define a canonical complement of the spaces $T_pL_b$, so we obtain a canonical lift of $w_i$ by prescribing $\tilde{w}_i\in J(TL_b)$. In particular this implies that $L$ is parallelizable: each $T_pL_b$ is canonically isomorphic to $T_bB$ via the map
\begin{equation}\label{eq:canonical_lift}
T_pL_b\rightarrow T_bB,\ \ v\mapsto w:=\pi_*[p](-Jv).
\end{equation}
Given a basis $w_i$ of $T_bB$, we will denote the corresponding vector fields on $L_b$, defined via (\ref{eq:canonical_lift}), by $v_i$. Notice that $-Jv_i$ coincides with the canonical $\tilde{w}_i$ defined above. 

In this context, the process of vertical integration has an extra feature: given that on each fibre $L_b$ we already have the $n$-dimensional form $\Omega_{|TL_b}$, there must exist a ``density function" which relates it to the $n$-form we are integrating along the fibre. Let us compute this function.

\begin{lem}\label{l:Omega_STR}
Let $\Omega$ be a $(n,0)$-form on $M$. Assume $M$ admits a TR fibration. Then, using the above notation, along each fibre $L_b$ and for each basis $w_1,\dots,w_n$ of $T_pB$,
\begin{equation*}
(-1)^{\frac{n(n-1)}{2}}\cdot\Omega_M(\tilde{w}_1,\dots,\tilde{w}_n,\cdot,\dots,\cdot)_{|TL_b}=\bar{\Omega}(v_1,\dots,v_n)\cdot\Omega_{|TL_b}(\cdot,\dots,\cdot).
\end{equation*}
It follows that, for each $b\in B$,
\begin{equation*}
\Omega_B[b](w_1,\dots,w_n)=\int_{L_b}\bar{\Omega}(v_1,\dots,v_n)\cdot\Omega.
\end{equation*}
\end{lem}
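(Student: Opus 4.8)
The plan is to reduce the claimed identity of $n$-forms to a pointwise linear-algebra computation in a single tangent space, and then to integrate. Fix $b\in B$ and $p\in L_b$, and set $V:=T_pM$ with its complex structure $J$. Both sides of the first identity are alternating $n$-forms on the $n$-dimensional real vector space $T_pL_b$: the left-hand side because contracting the $2n$-form $\Omega_M$ with the $n$ fixed vectors $\tilde w_1,\dots,\tilde w_n$ leaves an $n$-form in the remaining slots, and the right-hand side because it is the scalar $\bar\Omega(v_1,\dots,v_n)$ times $\Omega$ restricted to $T_pL_b$. Since the space of alternating $n$-forms on an $n$-dimensional space is one-dimensional, it suffices to check the identity on one basis, and I would use $v_1,\dots,v_n$ itself --- which is a basis of $T_pL_b$ because the $v_i$ are the images of the basis $w_1,\dots,w_n$ of $T_bB$ under the isomorphism (\ref{eq:canonical_lift}).

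First I would absorb the normalization: squaring $(-1)^{n(n-1)/2}$ away gives $(-1)^{n(n-1)/2}\Omega_M=(i/2)^n\,\Omega\wedge\bar\Omega$, so the whole computation takes place with $\Omega\wedge\bar\Omega$. The totally real hypothesis enters here in an essential way: because $T_pL_b$ contains no complex line, the $(1,0)$-parts $v_i^{1,0}:=\tfrac12(v_i-iJv_i)$ form a $\C$-basis of $T^{1,0}_pM$ (and their conjugates $v_i^{0,1}$ a basis of $T^{0,1}_pM$). This lets me write $\Omega=\lambda\,\epsilon^1\wedge\dots\wedge\epsilon^n$ and $\bar\Omega=\bar\lambda\,\bar\epsilon^1\wedge\dots\wedge\bar\epsilon^n$ in the dual coframe, where $\lambda=\Omega(v_1,\dots,v_n)$; note that, since the $v_i$ are real, the right-hand side evaluated on $(v_1,\dots,v_n)$ is simply $\bar\lambda\cdot\lambda=|\lambda|^2$.

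The heart of the matter is the evaluation of the top form $\Theta:=\epsilon^1\wedge\dots\wedge\epsilon^n\wedge\bar\epsilon^1\wedge\dots\wedge\bar\epsilon^n$ on $(\tilde w_1,\dots,\tilde w_n,v_1,\dots,v_n)$, since $\Omega\wedge\bar\Omega=|\lambda|^2\,\Theta$. The key observation is that $\tilde w_i=-Jv_i$ decomposes as $-i\,v_i^{1,0}+i\,v_i^{0,1}$, while $v_i=v_i^{1,0}+v_i^{0,1}$; hence in the basis $(v_1^{1,0},\dots,v_n^{1,0},v_1^{0,1},\dots,v_n^{0,1})$ the coordinate matrix of the $2n$ inserted vectors $\tilde w_1,\dots,\tilde w_n,v_1,\dots,v_n$ is the block matrix $\left(\begin{smallmatrix} -iI & I \\ iI & I \end{smallmatrix}\right)$, whose determinant is $(-2i)^n$. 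Therefore $(\Omega\wedge\bar\Omega)(\tilde w_1,\dots,\tilde w_n,v_1,\dots,v_n)=|\lambda|^2(-2i)^n$, and multiplying by $(i/2)^n$ yields exactly $|\lambda|^2$ because $\big((i/2)(-2i)\big)^n=1$. This matches the right-hand side, proving the first identity. The second identity then follows immediately by integrating the pointwise equality of $n$-forms over $L_b$ and recalling the definition of $\Omega_B$.

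I expect the only real obstacle to be bookkeeping: pinning down the wedge-product and orientation conventions so that the determinant and the factor $(i/2)^n$ combine to give $1$ rather than a stray sign or power of $2$, and verifying carefully that total reality is precisely what makes $\{v_i^{1,0}\}$ a genuine basis of $T^{1,0}_pM$. A useful sanity check along the way is the case $n=1$, where one recovers the classical relation $\Omega_M=|f|^2\,dx\wedge dy$ for $\Omega=f\,dz$.
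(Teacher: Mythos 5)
Your proof is correct and takes essentially the same route as the paper's: both reduce the first identity to a pointwise linear-algebra check in an adapted frame where each side becomes a determinant --- the paper identifies $(T_pM,J)\cong\C^n$ with $T_pL_b=\R^n$ and normalizes $\Omega=e^{i\theta}\,dz^1\wedge\dots\wedge dz^n$, obtaining $\det(M)\det(N)$ on both sides, while you equivalently use the coframe dual to the $(1,0)$-parts $v_i^{1,0}$ (a $\C$-basis of $T^{1,0}_pM$ precisely by total reality) and a single block determinant $(-2i)^n$ cancelling $(i/2)^n$. The only differences are cosmetic and mildly in your favor: you test against the single basis $(v_1,\dots,v_n)$ rather than arbitrary vectors $a_i$, and your formulation with $\lambda=\Omega(v_1,\dots,v_n)$ handles points where $\Omega$ vanishes uniformly ($\lambda=0$), whereas the paper disposes of that case separately.
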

\begin{proof} The claim is trivially true wherever $\Omega$ vanishes. 

Assume $\Omega$ does not vanish. We may also assume that $\tilde{w}_i=-Jv_i$ so it suffices to prove 
$$(-1)^{\frac{n(n-1)}{2}}\cdot\Omega_M(-Jv_1,\dots,-Jv_n,a_1,\dots,a_n)=\bar\Omega(v_1,\dots,v_n)\cdot\Omega(a_1,\dots,a_n),$$
for any basis $v_1,\dots,v_n$ of $T_pL_b$ and vectors $a_1,\dots,a_n$ in $T_pL_b$.
We can identify $(T_pM,J)$ with $\C^n$ so that $T_pL_b$ corresponds to $\R^n$, described by variables $y_1,\dots,y_n$, and $\Omega$ coincides with $e^{i\theta}(dz^1\wedge\dots\wedge dz^n)$, for some $\theta$. It follows that $(-1)^{\frac{n(n-1)}{2}}\cdot\Omega_M=dx^1\wedge\dots\wedge dx^n\wedge dy^1\wedge\dots\wedge dy^n$. Let $M\in\GL(n,\R)$ denote the matrix whose columns contain the coordinates of $v_i$ in terms of the basis $\partial y_1,\dots,\partial y_n$. Then $M$ also represents the coordinates of $-Jv_i$ in terms of $\partial x_j$. Let $N\in\gl(n,\R)$ denote the matrix whose columns contain the coordinates of $a_i$ in terms of $\partial y_1,\dots,\partial y_n$. Then the LHS in the above equation is $\det(M)\det(N)$ while the RHS is $(-i)^n\det(M)\,i^n\det(N)$, so they coincide.
\end{proof}

In other words, for each choice of $w_1,\dots,w_n$, $\Omega_B(w_1,\dots,w_n)$ is the integral average of the density function $\bar{\Omega}(v_1,\dots,v_n)$.

\begin{definition} Let $\Omega$ be a $(n,0)$-form on $M$. A TR fibration is $\Omega$-\textit{parallel} if, for each $b\in B$ and using the parallelization defined in (\ref{eq:canonical_lift}),  the density function $\bar{\Omega}(v_1,\dots,v_n):L_b\rightarrow\C$ is constant.
\end{definition}

This condition is independent of the particular basis $w_i$ used to define $v_i$, but does depend on the specific parallelization (\ref{eq:canonical_lift}). In dimension 1 it is analogous to the fact that parallel fibres have constant distance from each other.

Let us check how these various conditions interact. Assume that, for some closed $\Omega$, $M$ admits a parallel STR fibration. If $\Omega$ vanishes at some point, the parallel condition forces it to vanish along the whole fibre. Clearly, on this fibre, $\int_{L_b}\Omega=0$. Since $\Omega$ is closed, the same holds for each fibre so $\Omega\equiv 0$. If we further assume that $\Omega$ is admissible, we reach a contradiction: it follows that $\Omega$ must be nowhere vanishing, so $K_M$ is holomorphically trivial.

We can now show that, in appropriate circumstances, extremal volume is a cohomological quantity (which depends on $J$). More specifically, the lower bound found in Proposition \ref{prop:special_minimizes} is actually an equality.

\begin{thm}\label{thm:calculate}
Assume that, for some closed and admissible $\Omega$, $M$ admits a parallel STR fibration (thus $K_M$ is holomorphically trivial), with generic fibre $L$. Let $\alpha$ be the homology class of the fibres. Then
\begin{equation*}
 \mu_\alpha=\frac{\left(\int_L\Omega\right)^2}{\int_M\Omega_M}.
\end{equation*}
\end{thm}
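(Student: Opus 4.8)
The plan is to establish the reverse inequality to the lower bound from Proposition \ref{prop:special_minimizes}. Since the STR fibration gives an $\Omega$-special fibre $L$, Proposition \ref{prop:special_minimizes} already yields $\mu_\alpha\geq(\int_L\Omega)^2/\int_M\Omega_M$. It remains to show $\mu_\alpha\leq(\int_L\Omega)^2/\int_M\Omega_M$, \emph{i.e.} that for \emph{every} admissible $(n,0)$-form $\Omega'$ one has $(\Len(\alpha,\Omega'))^2/A(\Omega')\leq(\int_L\Omega)^2/\int_M\Omega_M$. The key observation is that $\int_M\Omega_M=\int_M\Omega'_M$ is \emph{not} automatic, so the comparison must be set up carefully: the right-hand side is the value attained by the distinguished form $\Omega$, and we must beat every competitor $\Omega'$.

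The main idea I would pursue is to test each competitor $\Omega'$ against the fibres of the given fibration and exploit the vertical-integration machinery of Lemma \ref{l:Omega_STR}. First I would note that $\Len(\alpha,\Omega')\leq\int_{L_b}\Omega'_{L_b}$ for every fibre $L_b$, since each fibre lies in the class $\alpha$. Then I would integrate this inequality over the base $B$ against $\Omega_B$ (the vertically-integrated form built from the \emph{distinguished} $\Omega$), or more naturally average it, to relate $\Len(\alpha,\Omega')$ to $A(\Omega')=\int_M\Omega'_M=\int_B\Omega'_B$. The parallel STR hypothesis on $\Omega$ is what makes this rigid: because $\Omega(v_1,\dots,v_n)$ is constant along each fibre and the fibres are $\Omega$-special, the reference quantities $\int_L\Omega$ and $\int_M\Omega_M$ factor cleanly through the base, letting me write $\int_M\Omega_M=\int_B(\int_{L_b}\Omega)\,\text{(density)}$ in a way that I can compare term-by-term with $\int_M\Omega'_M$.

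Concretely, I expect the heart of the argument to be a Cauchy--Schwarz (or Hölder) estimate applied fibrewise. Writing $\Omega'=h\,\Omega$ for a complex function $h$ on $M$ (legitimate since $\Omega$ is nowhere vanishing), one has $\int_{L_b}\Omega'_{L_b}=\int_{L_b}|h|\,\Omega$ and $A(\Omega')=\int_M|h|^2\,\Omega_M$. Then $\Len(\alpha,\Omega')\leq\inf_b\int_{L_b}|h|\,\Omega$, and I would bound this infimum by an average over $B$, apply Cauchy--Schwarz on each fibre to compare $(\int_{L_b}|h|\,\Omega)^2$ with $(\int_{L_b}|h|^2\,\Omega)(\int_{L_b}\Omega)$, and finally integrate over $B$ using Lemma \ref{l:Omega_STR} together with $\int_B\Omega_B=\int_M\Omega_M$. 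The scale-invariance of $\Len^2/A$ means I may normalize $h$ freely, which should clean up the constants. Equality in Cauchy--Schwarz forces $|h|$ constant along fibres, consistent with $\Omega$ itself being the extremizer.

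The step I expect to be the main obstacle is controlling the \emph{infimum} $\Len(\alpha,\Omega')=\inf_{L'\in\alpha}\int_{L'}\Omega'_{L'}$ from above: bounding it by a single fibre's volume is easy, but to match the sharp constant I must bound it by an appropriate \emph{average} of fibre volumes, which requires that the fibration genuinely sweeps out all of $M$ and that the vertical-integration identity of Lemma \ref{l:Omega_STR} is invoked with the correct Jacobian factors. Getting the orientations and the $(-1)^{n(n-1)/2}$ sign conventions to line up so that all the integrals remain non-negative, and ensuring the admissibility (finite positive volume) of $\Omega'$ is preserved under the substitution $\Omega'=h\,\Omega$, is where the care lies; the analytic inequality itself is then a routine Cauchy--Schwarz.
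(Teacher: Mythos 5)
Your proposal is correct and follows essentially the same route as the paper: the lower bound via Proposition \ref{prop:special_minimizes}, then the upper bound by writing $\Omega'=h\,\Omega$, comparing $\Omega'$ to $\Omega$ fibrewise, and using the parallel condition together with Lemma \ref{l:Omega_STR} to pull the constant $\Omega(v_1,\dots,v_n)$ through the fibre integrals, deduce $\Omega_B'$ dominates (a multiple of) $\Omega_B$, and integrate over $B$. The only cosmetic difference is that you invoke Cauchy--Schwarz on each fibre directly, whereas the paper first normalizes $\Len(\alpha,\Omega')=\int_L\Omega$, rotates so that $h=\rho\geq 0$, and derives $\int_{L_b}\rho^2\,\Omega\geq\int_{L_b}\Omega$ from $\int_{L_b}(\rho-1)\,\Omega\geq 0$ and $\int_{L_b}(\rho-1)^2\,\Omega\geq 0$ --- an equivalent quadratic estimate --- and you correctly located the crux where the parallel hypothesis is indispensable.
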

\begin{proof} $\Omega$ admissible implies that $\int_M\Omega_M>0$. We already know that
$$\mu_\alpha\geq\frac{\left(\int_L\Omega\right)^2}{\int_M\Omega_M}.$$ 

To prove the opposite inequality, choose any admissible $\Omega'$. Up to rescaling we can assume $\Len(\alpha,\Omega')=\int_L\Omega$, so $\int_{L_b}|\Omega'|\geq\int_L\Omega$  for each fibre. Up to rotation we can assume $\Omega'=\rho\,\Omega$, for some non-negative $\rho:M\rightarrow\R$: this allows us to eliminate the norm, obtaining $\int_{L_b}\rho\,\Omega\geq\int_{L_b}\Omega$ thus $\int_{L_b}(\rho-1)\,\Omega\geq 0$. 

Since also $\int_{L_b}(\rho-1)^2\,\Omega\geq 0$, a simple algebraic manipulation now yields, for each fibre, 
\begin{equation*}
\int_{L_b}\rho^2\,\Omega\geq\int_{L_b}\Omega. 
\end{equation*}
Choose a positive basis $w_1,\dots,w_n$ for $T_bB$. As in Remark \ref{rem:orientations}, the corresponding quantity $\Omega(v_1,\dots,v_n)$ is positive and coincides with the constant density function $\bar{\Omega}(v_1,\dots,v_n)$. Multiplying both sides of the above inequality by this quantity and using Lemma \ref{l:Omega_STR}, we find $\Omega_B'\geq \Omega_B$.

Let us now integrate over $B$, obtaining $\int_M\Omega_M'\geq\int_M\Omega_M$. Inverting and multiplying both sides by $(\Len(\alpha,\Omega'))^2$, we find
$$\mu_\alpha\leq\frac{\left(\int_L\Omega\right)^2}{\int_M\Omega_M},$$ 
thus the result.
\end{proof}

Observe that the strategy in this proof is the following. We first argue that we can assume that the fibration is STR with respect to both forms $\Omega$, $\Omega'$. We then want to show that if $\Omega'$ increases the size of each fibre, compared to $\Omega$, then the induced form on $B$ also increases. Setting $f:=\rho^2$ and $g:=\Omega(v_1,\dots,v_n)$, this boils down to the following abstract question: does $\int f\geq \int 1$ imply $\int fg\geq \int g$? (Equivalently, setting $\tilde{f}:=f-1$: does $\int\tilde{f}\geq 0$ imply $\int\tilde{f}g\geq 0$?) In general the answer is no, explaining the importance of the parallel condition.

\begin{remark}\label{rem:parallel}
Using the notation of (\ref{eq:canonical_lift}), the hypotheses of Theorem \ref{thm:calculate} imply that $\Omega_B(w_1,\dots,w_n)=c\cdot\Omega(v_1,\dots,v_n)=c\cdot |\Omega(\tilde{w}_1,\dots,\tilde{w}_n)|$, where $c:=\int_L\Omega$. Normalizing $\Omega$ so that $c=1$, it follows that $B$ is endowed with a canonical volume form such that $\pi$ is a volume-preserving submersion, ie $\pi_*:(J(T_pL_b),|\Omega|)\rightarrow (T_bB,\Omega_B)$ is an isomorphism for all $p\in L_b$. Given an STR fibration, this property provides an alternative characterization of the parallel condition.
\end{remark}

In the situation of Theorem \ref{thm:calculate}, the map $\alpha\rightarrow\mu_\alpha$ has specific properties in terms of the algebraic structure on homology. In particular, $\mu_0=0$ and $\mu_{\lambda\alpha}=\lambda^2\mu_\alpha$. The following is also a simple consequence.

\begin{cor}\label{cor:multiplicative}
Assume manifolds $M_1$, $M_2$ admit parallel STR fibrations with respect to $\Omega_1$, $\Omega_2$ and classes $\alpha_1$, $\alpha_2$, as in Theorem \ref{thm:calculate}. Then the product manifold $M_1\times M_2$ admits a parallel STR fibration with respect to $\pi_1^*\Omega_1\wedge\pi_2^*\Omega_2$ and the class $\alpha_1\times\alpha_2$, where $\pi_i:M_1\times M_2\rightarrow M_i$ denotes the projection maps. 

Furthermore, $\mu_{\alpha_1\times\alpha_2}=\mu_{\alpha_1}\cdot\mu_{\alpha_2}$.
\end{cor}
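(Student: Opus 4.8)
The plan is to verify the two assertions of the corollary in sequence, using Theorem~\ref{thm:calculate} to reduce the second (numerical) claim to the first (structural) one.

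First I would set up the product fibration. If $M_i$ fibres over $B_i$ with generic fibre $L_i$, then $M_1\times M_2$ fibres over $B_1\times B_2$ with generic fibre $L_1\times L_2$, simply by taking the product of the two projection maps. I must check three things about this product fibration. (i) It is TR: at a point $(p_1,p_2)$, the tangent space to the fibre is $T_{p_1}L_1\oplus T_{p_2}L_2$, and since the complex structure on the product splits as $J_1\oplus J_2$, this subspace contains no complex line because neither summand does. (ii) It is STR with respect to $\Omega:=\pi_1^*\Omega_1\wedge\pi_2^*\Omega_2$: here I would use the product structure of the volume form. Identifying the tangent space of the product fibre as the direct sum above, the phase of $\Omega$ restricted to a positive basis factors as the product of the phases of $\Omega_1$ and $\Omega_2$ on the respective factors; since each factor has phase $1$ (the $L_i$ are $\Omega_i$-STR), so does the product. (iii) It is $\Omega$-parallel: the function $\Omega(v_1,\dots,v_{n_1+n_2})$ on a product fibre, evaluated on the canonical lift defined by~\eqref{eq:canonical_lift}, factors as a product of $\Omega_1(v_1,\dots,v_{n_1})$ and $\Omega_2(v_1,\dots,v_{n_2})$ (up to a sign from reordering the wedge factors, which is constant), and each of these is constant along its own fibre by hypothesis, hence the product is constant along $L_1\times L_2$. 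The class of the generic fibre is $\alpha_1\times\alpha_2$ by the Künneth identification $H_{n_1}(M_1)\otimes H_{n_2}(M_2)\hookrightarrow H_{n_1+n_2}(M_1\times M_2)$.

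Having established that the product fibration satisfies the hypotheses of Theorem~\ref{thm:calculate}, I can apply that theorem to compute $\mu_{\alpha_1\times\alpha_2}$ directly as $\left(\int_{L_1\times L_2}\Omega\right)^2\big/\int_{M_1\times M_2}\Omega_M$. The numerator factors because $\int_{L_1\times L_2}\pi_1^*\Omega_1\wedge\pi_2^*\Omega_2=\left(\int_{L_1}\Omega_1\right)\left(\int_{L_2}\Omega_2\right)$ by Fubini applied to the product of pulled-back forms. For the denominator I would verify that $\Omega_M=\Omega_{M_1}\wedge\Omega_{M_2}$ up to the normalizing sign, which follows from the definition $\Omega_M=(-1)^{n(n-1)/2}(i/2)^n\,\Omega\wedge\bar\Omega$ by expanding $\Omega\wedge\bar\Omega$ as a product and tracking the signs from $n=n_1+n_2$; then $\int_{M_1\times M_2}\Omega_M=\left(\int_{M_1}\Omega_{M_1}\right)\left(\int_{M_2}\Omega_{M_2}\right)$ again by Fubini. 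Combining, the quotient factors as the product of the two quotients, each of which equals $\mu_{\alpha_i}$ by Theorem~\ref{thm:calculate} applied to $M_i$, giving $\mu_{\alpha_1\times\alpha_2}=\mu_{\alpha_1}\cdot\mu_{\alpha_2}$.

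The main obstacle I anticipate is \textbf{bookkeeping of the orientation and sign conventions}, rather than any genuine analytic difficulty. The normalizing factor $(-1)^{n(n-1)/2}$ is not multiplicative in $n$, so verifying $\Omega_M=\Omega_{M_1}\wedge\Omega_{M_2}$ requires comparing $(-1)^{(n_1+n_2)(n_1+n_2-1)/2}$ with $(-1)^{n_1(n_1-1)/2}(-1)^{n_2(n_2-1)/2}$ together with the sign incurred in permuting the real coordinate $2$-forms $dx^j\wedge dy^j$ past one another when the two blocks of variables are interleaved. One must also confirm that the product orientation on $B_1\times B_2$ (the one making $\Omega_{B_1\times B_2}$ non-negative, as specified in the construction) agrees with the product of the chosen orientations on $B_1$ and $B_2$; since each $\Omega_{B_i}$ is non-negative and the induced form on the product is their wedge up to the same bookkeeping sign, this should match, but it deserves an explicit check so that the factorizations of numerator and denominator are genuinely valid and no stray sign corrupts the final identity.
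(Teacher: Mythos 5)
Your proposal is correct and takes essentially the route the paper intends: the paper gives no written proof, presenting the corollary as ``a simple consequence'' of Theorem \ref{thm:calculate}, and your argument---verify that the product fibration is TR, STR and parallel for $\pi_1^*\Omega_1\wedge\pi_2^*\Omega_2$, then factor $\left(\int_L\Omega\right)^2\big/\int_M\Omega_M$ via Fubini---is exactly that deduction. The only step you leave implicit is closedness of $\pi_1^*\Omega_1\wedge\pi_2^*\Omega_2$ (immediate from the Leibniz rule), and your anticipated sign bookkeeping does close cleanly: $(-1)^{(n_1+n_2)(n_1+n_2-1)/2}=(-1)^{n_1(n_1-1)/2}\,(-1)^{n_2(n_2-1)/2}\,(-1)^{n_1n_2}$, and the factor $(-1)^{n_1n_2}$ cancels against the sign from commuting $\pi_2^*\Omega_2$ past $\pi_1^*\bar\Omega_1$ in $\Omega\wedge\bar\Omega$, so that $\Omega_{M_1\times M_2}=\Omega_{M_1}\wedge\Omega_{M_2}$ holds with no stray sign.
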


\section{Examples}\label{s:first_examples}

Theorem \ref{thm:calculate} raises the question of finding examples of manifolds equipped with a parallel STR fibration. The simplest way to ensure the parallel assumption is via an appropriate group action. Specifically, assume $G$ acts holomorphically on $M$, preserves $\Omega$, and its orbits are STR. The induced fibration is then automatically parallel. We illustrate via the following example.

\paragraph{Reinhardt domains.} A \textit{Reinhardt domain} is an open subset of $\C^n$ invariant under the standard action of $(\Sph^1)^n$, \textit{i.e.} the action of matrices $M\in\GL(n,\C)$ of diagonal form $(e^{i\theta_1},\dots,e^{i\theta_n})$.

The simplest case is an annulus $\{r_1<|z|<r_2\}\subseteq\C$. Each circle $|z|=r$ (oriented counter-clockwise) is totally real, and special with respect to the (well-defined, closed, not exact) $(1,0)$-form $\Omega:=-i(d\log z)$. Let $\alpha$ denote the homology class of any such circle. In order to calculate $\mu_\alpha$, it is convenient to take advantage of its invariance under biholomorphisms. We thus change coordinates,
\begin{equation*}
w=t+i\theta\mapsto z:=\exp w
\end{equation*} 
identifying the annulus with $(\log r_1,\log r_2)\times \Sph^1$ so that $\exp^*\Omega=-i dw$. It is now clear that the annulus admits a fibration structure as in Theorem \ref{thm:calculate}. It follows that 
\begin{equation*}
\mu_\alpha=\frac{(-i)^2(2\pi i)^2}{2\pi(\log r_2-\log r_1)}=\frac{2\pi}{\log r_2-\log r_1},
\end{equation*}
showing that extremal volume (in this case, extremal length) can be expressed in terms of ``logarithmic length", \textit{i.e.} length of the base space calculated with respect to the logarithmic variable.

Corollary \ref{cor:multiplicative} shows how to calculate extremal volume for Reinhardt domains given by products of annuli.

A general Reinhardt domain can be viewed as a bundle over a base space in $\R^n$, endowed with variables $(|z_1|,\dots,|z_n|)$, with fibers given by the orbits of the group action. The generic fiber is $(\Sph^1)^n$, but the fiber over a point which has some $|z_i|=0$ collapses to a lower-dimensional torus. We will focus on bounded domains in which no collapsing occurs, \textit{i.e.} the base space is relatively compact in $(\R^+)^n$. Changing coordinates via 
\begin{equation*}
(w_1=t_1+i\theta_1,\dots,w_n=t_n+i\theta_n)\mapsto (z_1:=\exp w_1,\dots, z_n:=\exp w_n)
\end{equation*}
shows that the fibers are parallel STR with respect to the holomorphic $(n,0)$-form $(-i)^n dw^1\wedge\dots\wedge dw^n$. Let $B\subseteq\R^n$, endowed with the variables $(t_1,\dots,t_n)$, denote the new base space and $\alpha$ denote the homology class of the fiber. Then
\begin{equation*}
\mu_\alpha:=\frac{(-i)^{2n}(2\pi i)^{2n}}{(2\pi)^n\int_B\,dt^1\wedge\dots\wedge dt^n}=\frac{(2\pi)^n}{\vol(B)},
\end{equation*}
where again $\vol(B)$ indicates the ``logarithmic volume" of the original base space, appropriately oriented.

The conclusion is that extremal volume provides a geometrically intuitive, easy to calculate, biholomorphic invariant for our class of Reinhardt domains. 

\begin{remark}\label{rem:algebraic_bihols} Consider the biholomorphisms $\phi:M\rightarrow M'$ between Reinhardt domains in $(\C^*)^n$ which preserve the torus action in the following sense: any $g'\in (\Sph^1)^n$ acting on $M'$ can be obtained as $\phi\circ g\circ\phi^{-1}=g'$, for some $g\in(\Sph^1)^n$ acting on $M$, and viceversa. It is shown in  \cite{Shimizu} that such biholomorphisms are generated by (i) dilations of the form $(z_1,\dots,z_n)\mapsto (a_1z_1,\dots,a_nz_n)$, for some $a_i\in\C^*$, (ii) maps of the form 
\begin{equation*}
(z_1,\dots,z_n)\mapsto (z_1^{a_{11}}z_2^{a_{21}}\dots z_n^{a_{n1}},\dots,z_1^{a_{1n}}z_2^{a_{2n}}\dots z_n^{a_{nn}}),
\end{equation*}
where $(a_{ij})\in GL(n,\Z)$ and thus has determinant $\pm 1$. For these specific maps it is simple to check that $\vol(B)$ is preserved, and thus defines an invariant within this specific category. Our calculation shows that extremal volume offers a natural generalization of this invariant to any complex manifold, and that it is invariant under all biholomorphisms.
\end{remark}

In Calabi-Yau geometry and Mirror Symmetry there is strong interest in the existence and properties of special Lagrangian fibrations. Compared to our parallel STR fibrations, this imposes the stronger Lagrangian condition on the fibres but no specific condition on the fibration. The following example shows that these different assumptions, ie Lagrangian fibres and parallel fibration, are independent.

\paragraph{A special Lagrangian fibration.}Consider the fibration \cite{HL}
\begin{equation*}
F:\C^3\rightarrow \R^3, \ \ (z_1,z_2,z_3)\mapsto (|z_1|^2-|z_3|^2, |z_2|^2-|z_3|^2, \Imm(z_1z_2z_3))
\end{equation*} 
Each fibre $L_b:=F^{-1}(b)$, $b\in\R^3$, is special Lagrangian. There is a group $G\simeq \Sph^1\times\Sph^1$ acting holomorphically on $\C^3$ which  preserves $\Omega:=dz^1\wedge dz^2\wedge dz^3$ and the fibres, but the action is not transitive on the fibres. The Lagrangian condition implies that $J(T_pL_b)=(T_pL_b)^\perp$ so it is generated by the gradient vectors $\nabla F^i$. In the notation of (\ref{eq:canonical_lift}) and letting $v_i$ denote the standard basis of $	\R^3$, one can explicitly calculate the appropriate linear combinations $\tilde{w}_i$ of $\nabla F^i$ such that $dF(\tilde{w}_i)=v_i$, then use this to show that this fibration is not parallel.

We remark that any special Lagrangian fibration $\pi:M\rightarrow B$ from a Calabi-Yau manifold onto a Riemannian manifold $B$ such that $\pi$ is also a Riemannian submersion, ie $\pi_*:(T_pL_b)^\perp\rightarrow T_bB$ is an isometry for all $p\in L_b$, would be parallel. This follows from Remark \ref{rem:parallel}.

\ 

Special Lagrangian fibrations are known to be highly constrained both analytically \cite{McLean} and topologically. The following example shows that STR submanifolds and fibrations can instead appear in infinite-dimensional families. Analytically, an important difference between the two cases is that the STR condition is non-elliptic. We refer to \cite{LP1,LP3} for a discussion of this fact in a slightly different context. The example below also emphasizes that the parallel condition is more subtle than might be apparent. 

\paragraph{Elliptic fibre bundles.}Choose a complex torus $\C/\Lambda$, where $\Lambda$ is the lattice generated by $\{1,\tau\}$. Let $E$ be a holomorphic line bundle over the torus and let $E^*$ denote $E$ minus the zero section. Locally, we can identify the total space of $E^*$ with $\C\times \C^*$. Given two local charts with coordinates $(z,w)$ and $(\zeta,\eta)$, the transition functions are of the form $(z,w)\mapsto (z+\lambda,\phi(z)w)$, for some $\lambda\in\Lambda$ and $\phi\in\mathcal{O}^*$. The differential form $\Omega:=dz\wedge (-id\log w)=-i(dz\wedge dw/w)$ on $\C\times\C^*$ is invariant under gluing. It follows that $\Omega$ is a well defined holomorphic $(2,0)$-form on the total space of $E^*$. Choose $c>1$ and consider the action of $\Z$ on $\C^*$ defined by $n\cdot w:=c^n w$. It induces an action on $E^*$ which preserves $\Omega$. Let $M$ denote the quotient complex surface $E^*/\Z$, again endowed with $\Omega$. It is a holomorphic fibre bundle over $\C/\Lambda$ with fibre isomorphic to the  elliptic curve $\C^*/\Z\simeq\C/\Lambda'$, where $\Lambda'$ is the lattice generated by $\{\log c, 2\pi i\}$. The group action of $\C^*/\Z$ endows $M$ with a principal fibre bundle structure.

Assume $E$ is topologically trivial. Choose any smooth section $\sigma$ of $E^*$. Then, for any fixed $b:=(s,t)$ with $s\in [0,1]$ and $t\in [0,\log c]$, $(x,\theta)\mapsto e^{t+i\theta}\cdot\sigma(x+s\tau)$ defines a 2-dimensional torus $L_b$ in $M$. Varying $b$, we obtain a fibration over the torus $B$ defined by the variables $(s,t)$. Notice that if we fix $b,x$, the remaining variable $\theta$ parametrizes a TR circle in the complex fibre $(E^*/\Z)_b$. Varying $x\in [0,1]$ generates a direction transverse to this fibre, so each torus $L_b$ is necessarily TR. Now choose $b\in B$ and $p\in L_b$. In terms of local coordinates $(z,w)\in\C\times\C^*$, $p=(x+s\tau, e^{t+i\theta}\sigma(x+s\tau))$ and $T_pL_b$ is generated by the vectors  $v_1:=(1,e^{t+i\theta}\sigma_x)$, $v_2:=(0,ie^{t+i\theta}\sigma)$, so $\Omega(v_1,v_2)=-idz(v_1)dw(v_2)/(e^{t+i\theta}\sigma)\equiv 1$. It follows that each $L_b$ is STR, so we have obtained infinitely many (depending on $\sigma$) STR torus fibrations of $M$ over the same base space $B$. 

In general however the above vectors are not of the form $v_i=J(\tilde{w}_i)$ required by (\ref{eq:canonical_lift}). Indeed, the projection map onto $(s,t)$ is of the form $(z,w)\mapsto(\pi_\tau(z),\log |w|)$ (up to the quotient operation in the torus  $B$), where $\pi_\tau$ calculates the second coordinate with respect to the basis $\{1,\tau\}$. Linearizing it at $p$ and applying it to $-iv_1,-iv_2$ yields vectors in $T_bB$ which are not independent of $x,\theta$, ie of $p\in L_b$. It follows that the fact $\Omega(v_1,v_2)\equiv 1$ does not prove that these fibrations are parallel, and indeed they are not.

On the other hand one can show (\cite{BHPV}, p. 197) that, whenever $E$ is topologically trivial, $M$ is a complex torus. We will show in Section \ref{s:example}, via a different construction, that any such torus also admits parallel STR fibrations.

If $E$ is not topologically trivial, the above construction produces examples of ``primary Kodaira surfaces". Any such surface is non-K\"ahler. In this case it is not clear if one can produce a global STR fibration. However, recall that any $\Sph^1$-bundle over $\Sph^1$ is trivial. It follows that we can produce a section $\sigma$ of the unitary subbundle in $E^*$ (for some metric), restricted to the circle in $\C/\Lambda$ parametrized by $x\mapsto (x+s\tau)$ (for any fixed $s$). We thus obtain a $\Omega$-STR torus as above, which we can use to obtain lower bounds for the corresponding extremal volume $\mu_\alpha$ as in Section \ref{s:estimates}.

\section{Moduli spaces of complex tori}\label{s:example}

Another class of examples endowed with a group action is provided by complex tori $\C^n/\Lambda$. Here, $\Lambda$ is a lattice in $\C^n=\R^{2n}$ of maximal rank, \textit{i.e.} an additive subgroup generated by $2n$ $\R$-linearly independent vectors. 

Compared to Reinhardt domains, this class has two important features: (i) complex tori are Lie groups, so the group action here is much stronger, (ii) the manifold is compact so, up to normalization, any holomorphic volume form is a (constant) rotation of $dz^1\wedge\dots\wedge dz^n$. It will thus suffice to focus on this form. Our interest in this class stems from the fact that it will suggest a new twist to the theory of extremal volume.

For $n=1$, each integral homology class $\alpha\in H_1(\C/\Lambda;\Z)$ can be represented by a segment in $\C$ which connects $0$ to an element of the lattice, so we can identify $H_1(\C/\Lambda;\Z)\simeq\Lambda$. Choose any such $\alpha\simeq\lambda=|\lambda| e^{i\theta}$. The corresponding closed curve in $\C/\Lambda$ is $\Omega$-STR, where $\Omega=e^{-i\theta}dz$ is closed. By translation we obtain a parallel STR fibration of $\C/\Lambda$.

More generally, any integral homology class $\alpha\in H_n(\C^n/\Lambda;\Z)$ can be represented by a subtorus generated by $n$ $\R$-linearly independent vectors in the lattice. The subtorus is TR if the vectors are $\C$-linearly independent. As above, it is then automatically STR for an appropriate choice of closed (constant) $\Omega$. Notice that, in order to properly define the base space $B$, one should keep in mind that the fibres may wrap multiple times around the torus.

The corresponding $\mu_\alpha$ can be calculated using Theorem \ref{thm:calculate} and rotation-invariance of the relevant quantities. We summarize as follows. 

\begin{cor}\label{cor:tori_extremalvol}
The extremal volume of any class $\alpha$ represented by a TR subtorus $L$ in $\C^n/\Lambda$ can be explicitly calculated in terms of the standard volume form $\Omega=dz^1\wedge\dots\wedge dz^n$. Specifically,
 \begin{equation*}
  \mu_\alpha=\frac{(\int_L|\Omega|)^2}{\int_M\Omega_M}.
 \end{equation*}
For all other classes, $\mu_\alpha=0$.
\end{cor}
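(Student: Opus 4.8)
The plan is to prove the two statements separately, reducing the first to Theorem \ref{thm:calculate} and the second to the fact that complex directions are invisible to every $\Omega$.

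\emph{The formula for TR classes.} Suppose $\alpha$ is represented by a TR subtorus $L$, spanned by $\C$-linearly independent lattice vectors $v_1,\dots,v_n$. Since $L$ is flat and $\Omega=dz^1\wedge\dots\wedge dz^n$ is constant, the phase of $\Omega$ along $L$ is a single constant $e^{i\theta}$; rotating to $\Omega^{\sharp}:=e^{-i\theta}\Omega$ makes $L$, and hence by translation-invariance every translate of $L$, $\Omega^{\sharp}$-special. Translating $L$ therefore produces a fibration of $\C^n/\Lambda$ whose fibres are STR for the closed form $\Omega^{\sharp}$, and because all the data are translation-invariant the fibration is parallel. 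Theorem \ref{thm:calculate} then applies and gives $\mu_\alpha=(\int_L\Omega^{\sharp})^2/\int_M\Omega^{\sharp}_M$. Finally I convert back to the standard form: a rotation leaves the real volume form unchanged, so $\Omega^{\sharp}_M=\Omega_M$, and since $L$ is $\Omega^{\sharp}$-special we have $\int_L\Omega^{\sharp}=\int_L|\Omega^{\sharp}|=\int_L|\Omega|$. Substituting yields the stated formula.

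\emph{Vanishing for the remaining classes.} For a class not represented by a TR subtorus, I would represent $\alpha$ by a subtorus $L$ spanned by lattice vectors that are $\C$-linearly dependent. Then $L$ fails to be TR: at each point $T_pL$ contains a complex line, uniformly along $L$ because $L$ is flat. As recorded earlier, any $n$-plane containing a complex line has $\Omega_\pi=0$ for \emph{every} section $\Omega$ of $K_M$; hence $\Omega_L\equiv 0$ for every admissible smooth $\Omega$, so $\int_L\Omega_L=0$ exactly. Consequently $\Len(\alpha,\Omega)=0$ for every admissible $\Omega$, and therefore $\mu_\alpha=0$.

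\emph{Main obstacle.} The computation for TR classes is essentially bookkeeping once Theorem \ref{thm:calculate} is in hand; the only delicate points are selecting the rotation $e^{-i\theta}$ uniformly and defining the quotient base in the presence of fibres that wrap several times around the torus. The substantive issue is the exhaustiveness of the dichotomy: I must ensure that \emph{every} class not represented by a TR subtorus admits a representative whose tangent planes contain complex lines, so that all $\Omega$ vanish on it. For a class represented by a single subtorus this follows immediately from the $\C$-linear dependence of the spanning vectors, as above; the step requiring genuine care is whether subtori account for all of $H_n(\C^n/\Lambda;\Z)$, or whether for the remaining classes the vanishing must instead be obtained by approximating $\alpha$ with cycles that are asymptotically complex. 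I expect this approximation step, rather than either explicit computation, to be the crux.
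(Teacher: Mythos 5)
Your treatment of the TR classes is correct and is the paper's own argument: a constant rotation $\Omega^{\sharp}=e^{-i\theta}\Omega$ makes the flat subtorus special, translation produces a parallel STR fibration (modulo the bookkeeping for multiply-wrapping fibres, which the paper also only flags), Theorem \ref{thm:calculate} gives the value, and rotation-invariance of $\Omega_M$ and of $\Omega_L$ converts it back to the standard form. Likewise, your vanishing argument for classes represented by a non-TR subtorus --- constant tangent plane containing a complex line, hence $\Omega_L\equiv 0$ and $\Len(\alpha,\Omega)=0$ for every admissible $\Omega$ --- is exactly the argument the paper intends.

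The obstacle you flag at the end is, however, a genuine gap, and the approximation you hope for cannot exist. Under the identification $H_n(\C^n/\Lambda;\Z)\cong\bigwedge^n\Lambda$, a subtorus represents a decomposable class $w_1\wedge\dots\wedge w_n$, and for $n\geq 2$ not every class is decomposable; the paper's preliminary assertion that every integral class is represented by a subtorus is therefore too strong, and both your proof and the paper's cover only the subtorus-representable classes. For the remaining classes the vanishing claim can actually fail: since the constant form $\Omega=dz^1\wedge\dots\wedge dz^n$ is closed, the computation in the proof of Proposition \ref{prop:special_minimizes} (applied to a suitable rotation of $\Omega$) gives $\int_{L'}\Omega_{L'}\geq \left|\int_\alpha\Omega\right|$ for every current $L'\in\alpha$, hence $\mu_\alpha\geq \left|\int_\alpha\Omega\right|^2/\int_M\Omega_M$. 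On $\C^2/(\Z^2+i\,\Z^2)$ take $\alpha=e_1\wedge e_2-(ie_1)\wedge(ie_2)$: then $\alpha\wedge\alpha\neq 0$, so $\alpha$ is not decomposable and is represented by no subtorus, TR or otherwise; yet $\int_\alpha dz^1\wedge dz^2=1-(i\cdot i)=2$, so $\mu_\alpha\geq 4/\int_M\Omega_M>0$. Thus no cycles that are ``asymptotically complex'' can drive the $\Omega$-volume in such a class to zero, and the dichotomy must be restricted: the clause ``for all other classes, $\mu_\alpha=0$'' is provable (and true) only when read as ``for all other classes represented by subtori'', which is precisely the restricted statement your argument, like the paper's, establishes.
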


\begin{remark} Recall that generic complex tori do not admit complex submanifolds. On the other hand, any $\Lambda$ admits a complex basis, thus a TR subtorus. It follows that some $\mu_\alpha$ is always non-trivial. This highlights the usefulness, in the context of non-algebraic manifolds, of an invariant based on TR, rather than complex, submanifolds. 
\end{remark}

Since $\mu_\alpha$ is biholomorphically invariant, we can make use of the known classification results for complex tori in order to simplify the presentation of the torus. Studying these moduli spaces of complex tori raises also a new question, which is foundational for Section \ref{s:cpx_systole}.

\ 

\noindent\textbf{Question:} How does extremal volume behave with respect to variations of $J$? 

\ 

The 1-dimensional case, explained below, is classical: a certain upper bound is uniform with respect to the complex structure. Lemma \ref{l:polartori} will provide an analogue in higher dimensions.

\paragraph{Dimension 1.} Recall that any holomorphic map $f:\C/\Lambda_1\rightarrow\C/\Lambda_2$ lifts to an affine holomorphic function $\tilde{f}(z)=z_0 z+z_1:\C\rightarrow \C$. Up to translations on $\C/\Lambda_2$ we may assume $\tilde{f}(z)=z_0 z$. The map $f$ is a biholomorphism if and only if $\tilde{f}(\Lambda_1)=\Lambda_2$, \textit{i.e.} $\Lambda_2=z_0\cdot\Lambda_1$ for some $z_0\in\C^*$. For example, assume $\Lambda_2$ is obtained via reflection of $\Lambda_1$ across $\R$. The corresponding tori are then generally only complex-conjugate, not biholomorphic, unless $\Lambda_1$ is invariant under this reflection.

The moduli space $\mathcal{M}$ of complex tori can now be built as follows. Any lattice $\Lambda$ in $\C=\R^2$ can be identified with an orbit of the right action of $GL(2,\Z)$ on the Stiefel space $L(\R^2)\simeq GL(2,\R)$ of linear bases on $\R^2$. As seen above, biholomorphic tori are obtained via complex multiplication, which induces the left action of $\C^*$ on $L(\R^2)$. The moduli space of complex tori is thus the double quotient
\begin{equation*}
\mathcal{M}=GL(1,\C)\backslash \GL(2,\R)/GL(2,\Z).
\end{equation*}
This space becomes more concrete if we choose a ``canonical" $\Lambda$ in each biholomorphic equivalence class.  It is well known that such $\Lambda$ can be defined via bases of the form $\{1,\tau\}$, where $\tau$ belongs to the domain $\mathcal{D}\subseteq\C$ defined by the conditions $\Imm\tau>0$, $|\Ree \tau|\leq 1/2$ and $|\tau|\geq 1$. In particular, any $\tau\in\mathcal{D}$ has the property $\Imm(\tau)=|\tau|\sin\theta\geq \sqrt{3}/2$. Up to appropriate identifications along the boundary, this domain exactly parametrizes $\mathcal{M}$.

For example, consider the torus corresponding to $\tau=x+iy\in\mathcal{D}$, \textit{i.e.} generated by $\{1,\tau\}$. As seen above, the conjugate torus is generated by $\{1,x-iy\}$. The basis $\{1, -x+iy\}$ generates the same lattice: we thus see that this torus corresponds to $\tau'=-x+iy\in\mathcal{D}$. In summary, all such pairs $\tau$, $\tau'$ correspond to complex conjugate tori (not biholomorphic unless $x=0$).

Given any complex torus, we can now express any of its extremal lengths in terms of $\tau$. For example,  let $\alpha\in H_1$ be the class of the segment ending in $1$. The corresponding subtorus is STR with respect to $\Omega=dz$. Let us apply Theorem \ref{thm:calculate} (or Corollary \ref{cor:tori_extremalvol}), noticing that $\int_M\Omega_M$ is simply the Euclidean area of the fundamental domain generated by $1,\tau$. We thus obtain, for all complex tori, the uniform bound
\begin{equation}\label{eq:mu1bounded}
\mu_{\alpha}=\frac{1}{|\tau|\sin\theta}\leq \frac{2}{\sqrt{3}}.
\end{equation}
Analogously,  let $\alpha'\in H_1$ be generated by the segment ending in $\tau$. The corresponding subtorus is STR with respect to $\Omega'=e^{-i\theta} dz$. As above, we find $\mu_{\alpha'}=|\tau|/\sin\theta$.
 
Notice: the complex parameter $\tau$ and the real parameters arising from extremal length provide different invariants for complex tori. One can ask the following question: does extremal length provide a complete set of invariants? With the above two choices of reference classes, we have reconstructed $\tau$ up to the ambiguity between $\theta$ and $\pi-\theta$ so the  answer is yes, up to complex conjugation. According to Remark \ref{rem:anti_holomorphic}, this is the best we can hope for.

\paragraph{Higher dimensions.} The fundamental facts concerning complex tori $\C^n/\Lambda$ are similar. Any biholomorphism lifts to a complex affine transformation of $\C^n$, and the full moduli space can be identified with the double quotient of the Stiefel space $GL(n,\C)\backslash \GL(2n,\R)/GL(2n,\Z)$. The natural topology of this space is however not Hausdorff, so one generally prefers to restrict to special subclasses.

Let us consider the class of \textit{principally polarized abelian varieties}, \textit{i.e.} complex tori $(\C^n/\Lambda,\omega)$ endowed with a symplectic structure $\omega$ with the following property: there exists some basis of $\Lambda$ of the form $\{v_1,\dots,v_n,Jv_1,\dots,Jv_n\}$ with respect to which the matrix of $\omega$ has the standard form 
\begin{equation*}
\omega\simeq\left(\begin{array}{cc}
O&I\\
-I&O
\end{array}\right). 
\end{equation*}
In particular $\omega$ is integral on $\Lambda$, so these tori are projective. Two such tori are equivalent if they are related by a biholomorphism which also preserves the symplectic structures. They are parametrized, up to equivalence, by lattices of the form $\Z^n+\tau\cdot\Z^n$, where $\tau=A+iB\in GL(n,\C)$ is a complex matrix in the \textit{Siegel domain} defined by the conditions $A^t=A$, $B^t=B$, $B$ positive definite. Two such lattices, corresponding to $\tau$ and $\tau'$, define the same polarized tori if $\tau$ and $\tau'$ are in the same orbit of a certain action of $Sp(2n,\Z)$ on the Siegel domain.

A fundamental domain $\mathcal{D}$ for this action is known. We are interested in the fact, cf. \cite{klingen} Section 1.3  Lemma 2, that any $\tau\in\mathcal{D}$ satisfies a uniform lower bound $\det(B)\geq c(n)$. Choose $\alpha\in H^n(\C^n/\Lambda;\Z)$ represented by the subtorus defined by $\Z^n$. It is STR with respect to $\Omega=dz^1\wedge\dots \wedge dz^n$, and $\int_M\Omega_M=\det\left(\begin{array}{cc}I&A\\0&B\end{array}\right)$. Theorem \ref{thm:calculate} (or Corollary \ref{cor:tori_extremalvol}) now yields a uniform upper bound of the form 
$$\mu_\alpha=\frac{1}{\int_M\Omega_M}=\frac{1}{\det(B)}\leq d(n).$$ 

\begin{remark}\label{rem:lagrangian}
The subtorus defined by $\Z^n$ is actually special Lagrangian with respect to the polarization on $\C^n/\Lambda$.
\end{remark}

For our purposes, the polarization serves only to help define a useful moduli space, so the above classification is finer than necessary. Forgetting the symplectic structure, we will be content with the following summary.
\begin{lem}\label{l:polartori} There exists $d=d(n)$ such that any complex torus $\C^n/\Lambda$ which admits a principal polarization contains a TR torus whose homology class $\alpha\in H_n(\C^n/\Lambda;\Z)$ has positive extremal volume and satisfies
\begin{equation}\label{eq:mubounded}
\mu_\alpha=\frac{1}{\int_M\Omega_M}=\frac{1}{\det(B)}\leq d.
\end{equation}
\end{lem}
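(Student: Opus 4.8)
The plan is to exploit the biholomorphic invariance of extremal volume in order to reduce to a normal form, exhibit an explicit totally real subtorus, and then invoke Theorem~\ref{thm:calculate}; the only genuinely nontrivial ingredient will be imported from reduction theory. First I would use the classification recalled above: since $\mu_\alpha$ is a biholomorphic invariant and any principally polarized $\C^n/\Lambda$ is equivalent, through a polarization-preserving biholomorphism, to a torus with lattice $\Z^n+\tau\cdot\Z^n$ for some $\tau=A+iB$ in the Siegel fundamental domain $\mathcal{D}$, it suffices to treat this standard representative. I would take $L$ to be the subtorus swept out by the real directions, \emph{i.e.} the image of $\R^n=\{y=0\}\subseteq\C^n$; this is well defined because $\Z^n\subseteq\Lambda$, and it is totally real since the axes $\partial_{x^1},\dots,\partial_{x^n}$ are $\C$-linearly independent. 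Restricting $\Omega=dz^1\wedge\dots\wedge dz^n$ to $TL$ gives $dx^1\wedge\dots\wedge dx^n$, which is real and positive, so $L$ is $\Omega$-STR; I let $\alpha\in H_n(\C^n/\Lambda;\Z)$ denote its class.

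Next I would produce the fibration required by Theorem~\ref{thm:calculate}. Translating $L$ along the transverse directions $J(TL)=\Span(\partial_{y^1},\dots,\partial_{y^n})$ foliates $M$ by parallel copies of $L$, with base a torus parametrized by the imaginary coordinates modulo the lattice. Because $\Omega$ is constant and the torus is flat, all data are translation-invariant, so this fibration is STR and parallel, exactly as in the one-dimensional discussion above. Theorem~\ref{thm:calculate} (equivalently Corollary~\ref{cor:tori_extremalvol}) then yields $\mu_\alpha=(\int_L\Omega)^2/\int_M\Omega_M$, and both integrals are elementary: on $L$ we have $\Omega|_{TL}=dx^1\wedge\dots\wedge dx^n$, so $\int_L\Omega=1$, whence already $\mu_\alpha=1/\int_M\Omega_M$; and $\Omega_M$ is the standard Euclidean volume form, so $\int_M\Omega_M$ is the covolume of $\Lambda$, namely $\det\left(\begin{smallmatrix}I&A\\0&B\end{smallmatrix}\right)=\det(B)$. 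Hence $\mu_\alpha=1/\det(B)$, which is finite and strictly positive because $B$ is positive definite.

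Finally I would invoke the uniform estimate from reduction theory, cf.\ \cite{klingen} Section~1.3 Lemma~2: every $\tau\in\mathcal{D}$ satisfies $\det(B)\geq c(n)$ for a constant $c(n)>0$ depending only on $n$. Setting $d:=1/c(n)$ then gives $\mu_\alpha=1/\det(B)\leq d$ uniformly over the whole moduli space, as claimed. I expect this last step to be the only real obstacle: the geometric content of the lemma reduces entirely to the self-contained computation above, but the crucial fact that $\det(B)$ cannot degenerate to zero as $J$ varies---equivalently, that the fundamental domain stays bounded away from the cusps in this precise sense---is a genuine input from the reduction theory of $\Sp(2n,\Z)$ acting on the Siegel domain, which I would cite rather than reprove.
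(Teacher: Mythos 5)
Your proposal is correct and follows essentially the same route as the paper: reduce via the classification to the normal form $\Z^n+\tau\cdot\Z^n$ with $\tau=A+iB$ in the Siegel fundamental domain, take the STR subtorus corresponding to $\Z^n$ with $\Omega=dz^1\wedge\dots\wedge dz^n$, compute $\mu_\alpha=1/\det(B)$ via Theorem \ref{thm:calculate} (or Corollary \ref{cor:tori_extremalvol}), and conclude with the reduction-theoretic bound $\det(B)\geq c(n)$ from \cite{klingen}. The only difference is cosmetic: you spell out the parallel STR fibration by translation along $J(TL)$, which the paper leaves implicit in its earlier discussion of TR subtori.
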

Notice that any non-TR subtorus will yield an extremal volume with value zero. Forgoing this trivial case, the lemma shows that the set of positive values attained by the extremal volume cannot uniformly ``float off'' to infinity, as $J$ varies in this moduli space.

\section{Complex systolic inequalities}\label{s:cpx_systole}

Up to here we have presented extremal volume as a tool for extracting information from a given complex manifold. The previous section suggests however that it can also be used to encode information regarding moduli spaces. Specifically, (\ref{eq:mu1bounded}) and (\ref{eq:mubounded}) show that certain bounds on the geometry of the fundamental domain can be expressed in terms of extremal volume. In the 1-dimensional case there exists a classical alternative way of formulating bounds such as (\ref{eq:mu1bounded}) using the language of ``systolic geometry". Reviewing this concept will suggest a notion of ``complex systolic geometry'', thus providing an analogous reformulation of  (\ref{eq:mubounded}). The definition will require only the lower bounds provided in Section \ref{s:estimates}. 

\paragraph{Riemannian systolic geometry.}Let $M$ be a compact $m$-dimensional differentiable manifold. Given a Riemannian metric $g$ on $M$, one defines the \textit{systole} $s(g)$ of $(M,g)$ to be the smallest length of all non-contractible curves in $M$. One can show that such a minimizing curve exists; it is necessarily a geodesic. In higher dimensions this notion is usually generalized via homology classes: the \textit{$k$-systole} $s_k(g)$ is the smallest $k$-volume amoung all $k$-dimensional cycles representing any non-zero class in $H_k(M;\Z)$. Roughly speaking, the restriction to non-zero classes eliminates values arising from submanifolds generated by possible ``bubbles" in the metric, but having no topological significance. 

The main point of systolic geometry is to relate the values of these $k$-systoles, for various $k$ and $g$. The simplest case concerns the relation between $s(g)$ and $s_m(g)$, \textit{i.e.} the volume of $(M,g)$. Let us consider the case $m=2$. 
 
In two dimensions, given a compact surface $M$, systolic geometry establishes inequalities of the form 
\begin{equation}\label{eq:systolicineq}
\sup_g\frac{s(g)^2}{\int_M\vol_g}\leq c, 
\end{equation}
for some constant $c=c(M)$, as $g$ varies amoung all metrics on $M$. 

An upper bound of this type should not be taken for granted. Roughly speaking, it says that the area of $M$ is uniformly controlled by the length of its shortest non-contractible geodesic, in the sense that $\int_M\vol_g\geq d\cdot s(g)^2$, where $d$ is independent of $g$. In particular, on a topological level this basically implies that the manifold is in some sense ``generated" by non-contractible curves, otherwise these curves could not hope to control the total area. This is clearly false for $M=\Sph^2$, which has no non-contractible curves. 

We remark that one can also think of (\ref{eq:systolicineq}) as a boundaryless analogue of the classical isoperimetric problem, but notice that inequalities of the form (\ref{eq:systolicineq}) are opposite those which appear in isoperimetric problems. 

In $m$ dimensions the analogous inequality would be $s(g)^m/\int_M\vol_g\leq c$. Again, examples of the form $M=\Sph^1\times\Sph^2$ show that such upper bounds, uniform with respect to $g$, are in general impossible unless $M$ has special topological properties. From this point of view, the aim of systolic geometry is to use Riemannian metrics to obtain information on the topology of a given differentiable manifold. The lack of such bounds corresponds to the notion of ``systolic freedom", and reveals interesting connections between systolic inequalities and algebraic topology. We refer to \cite{berger} for a gentle introduction to systolic geometry and for further references.

\paragraph{Loewner's theorem.}The quantities appearing in (\ref{eq:extremallength}) and  (\ref{eq:systolicineq}) are very similar, though the former is more restrictive regarding the class of metrics. This implies that extremal length can provide a useful tool for systolic geometry. 

The case $M=\Sph^1\times\Sph^1$ is a well known example. In 1949, Loewner showed that such $M$ satisfies the systolic inequality (\ref{eq:systolicineq}). This can be proved as follows.

Any Riemannian metric $g$ on $M$ defines a notion of $\pi/2$-rotation, so it defines a complex structure $J$ which is integrable for dimensional reasons. The Riemann-Roch theorem implies that $(M,J)$ is biholomorphic to a torus of the form $\C/\Lambda$, for some $\Lambda$, and $g$ corresponds to $\rho^2g_{std}$, for some $\rho$. As in Section \ref{s:example}, we may assume that $\Lambda$ is generated by $\{1,\tau\}$. Using $\alpha$ as in (\ref{eq:mu1bounded}), by definition $s(g)\leq\Len(\alpha,\rho\,dz)$ so
\begin{equation*}\label{eq:loewner}
\frac{s(g)^2}{\int_M\vol_g}\leq \frac{(\Len(\alpha_1,\rho\,dz))^2}{\int_M\rho^2dxdy}\leq\mu_{\alpha}\leq \frac{2}{\sqrt{3}}.
\end{equation*}
The systolic inequality (\ref{eq:systolicineq}) follows, with $c=2/\sqrt{3}$.

In other words, Loewner's result is the Riemannian version of a holomorphic fact concerning the moduli space of tori. The Riemannian formulation (\ref{eq:systolicineq}) contains basically the same information as the holomorphic formulation (\ref{eq:mu1bounded}). 

\begin{remark}\label{rem:systolic_supremacy}
As is usual for extremal length, (\ref{eq:mu1bounded}) is formulated in terms of a specific homology class $\alpha$, defined in terms of a ``canonical", but still somewhat arbitrary, choice of fundamental domain for the moduli space. The systolic inequality (\ref{eq:systolicineq}) provides a more intrinsic, coordinate-free, formulation of a geometric property of the moduli space.  The class $\alpha$ could alternatively be defined as the minimizer of the extremal length functional on (non-zero) homology, bridging this difference.
\end{remark}

\paragraph{Complex systolic geometry.}We have already mentioned that, in higher dimensions, our notion of extremal volume has no Riemannian content. A direct analogy with the 1-dimensional situation and Loewner's theorem is thus not possible. In the spirit of Remark \ref{rem:systolic_supremacy}, however, the following notion of complex systolic geometry offers a canonical, geometric, reformulation of bounds such as (\ref{eq:mubounded}), applicable to any complex manifold admitting holomorphic volume forms.

\begin{lem}\label{lemma:constantphase}
Assume $\Omega$ is closed and not identically zero. Let $\Omega':=e^{i\theta}\Omega$, for some function $e^{i\theta}:M\rightarrow\Sph^1$. Then $\Omega'$ is closed if and only if $\theta$ is constant.
\end{lem}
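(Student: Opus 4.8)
The reverse implication is immediate: if $\theta$ is constant then $e^{i\theta}$ is a constant scalar, so $d\Omega'=e^{i\theta}\,d\Omega=0$. The plan is therefore to establish the forward implication. Writing $u:=e^{i\theta}:M\rightarrow\Sph^1$ and using $d\Omega=0$, the Leibniz rule gives
\begin{equation*}
d\Omega'=du\wedge\Omega.
\end{equation*}
Since $\Omega$ has type $(n,0)$ and the $(1,0)$-part $\partial u$ of $du$ wedges with it to produce a form of type $(n+1,0)$, which vanishes identically on an $n$-dimensional complex manifold, only the $(0,1)$-part survives:
\begin{equation*}
d\Omega'=\bar\partial u\wedge\Omega,
\end{equation*}
a form of type $(n,1)$. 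Consequently $\Omega'$ is closed if and only if $\bar\partial u\wedge\Omega=0$.

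Next I would analyse this condition on the open set $U:=\{p\in M:\Omega[p]\neq 0\}$. In local holomorphic coordinates $\Omega=f\,dz^1\wedge\dots\wedge dz^n$ with $f$ nowhere zero on $U$, so the vanishing condition reduces to $\bar\partial u\wedge dz^1\wedge\dots\wedge dz^n=0$. Expanding $\bar\partial u=\sum_j(\partial u/\partial\bar z^j)\,d\bar z^j$ and noting that the $(n,1)$-forms $d\bar z^j\wedge dz^1\wedge\dots\wedge dz^n$ are linearly independent, this forces $\partial u/\partial\bar z^j=0$ for all $j$; that is, $u$ is holomorphic on $U$. But $|u|\equiv 1$, and a holomorphic function whose image lies in $\Sph^1$ (a set with empty interior) must be locally constant by the open mapping theorem. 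Hence $u$ is locally constant on $U$.

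It then remains to globalize. Here I would invoke the fact, recalled just before Proposition \ref{prop:special_minimizes}, that a closed $(n,0)$-form is holomorphic: since $\Omega$ is holomorphic and not identically zero on $M$ (which we may assume connected, treating components separately otherwise), its zero set $Z=M\setminus U$ is a proper analytic subvariety of complex codimension at least one. The complement $U$ of such a set is connected and dense, so the locally constant $u$ is in fact constant on $U$, and by continuity constant on all of $M=\overline{U}$. A continuous constant branch of the argument then shows that $\theta$ itself is constant, as claimed.

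The main obstacle is precisely this globalization step. The local analysis only yields holomorphicity, hence local constancy, of $e^{i\theta}$ \emph{away} from the zeros of $\Omega$; a priori different connected pieces of the non-vanishing locus could carry different constants, and the behaviour across $Z$ must be controlled. The rigidity forced by closedness is exactly what resolves this: it guarantees that $Z$ is a thin analytic set of codimension $\geq 1$, so that $U$ is connected and dense. Without this input the statement would fail for a merely smooth $\Omega$, where the zero locus can be arbitrary and $e^{i\theta}$ need not be constant.
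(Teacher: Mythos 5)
Your proof is correct and follows essentially the same route as the paper's: both compute $d\Omega'=\bar\partial(\mbox{phase})\wedge\Omega$ (the paper's $ie^{i\theta}\bar\partial\theta\wedge\Omega$ is exactly your $\bar\partial u\wedge\Omega$), deduce that the unimodular phase is holomorphic off the zero set of $\Omega$, hence locally constant, and then globalize across the thin zero locus. If anything your globalization step is the more careful one, since the paper asserts that $\Omega$ ``can only vanish in isolated points,'' which is accurate only for $n=1$, whereas your observation that the zero set of the holomorphic section $\Omega$ is a proper analytic subvariety of codimension at least one, with connected dense complement, is the correct statement in all dimensions.
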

\begin{proof} Since $\Omega$ is closed and of type $(n,0)$, we find
\begin{equation*}
d\Omega'=ie^{i\theta}d\theta\wedge\Omega=ie^{i\theta}\bar\partial\theta\wedge\Omega.
\end{equation*}
The right hand side vanishes if and only if either $\Omega=0$ or $\bar\partial\theta=0$. $\Omega$ can only vanish in isolated points. At all other points $\theta$ is a real-valued holomorphic function, thus it is constant. By continuity, it is constant on $M$.  
\end{proof}

\begin{definition}\label{def:complexsystole} 
Let $M$ be a differentiable manifold. Let $J$ be an integrable complex structure on $M$ and $\Omega$ be a closed (thus holomorphic) $(n,0)$-form, not identically zero.

Let $H_n(\Omega)\subseteq H_n(M;\Z)$ denote the set of all homology classes $\alpha$ containing a $e^{i\theta}\Omega$-STR representative $L$, for some constant $\theta$.

If $H_n(\Omega)$ is not empty, we define the $(J,\Omega)$-systole $s(J,\Omega)$ of $M$ to be the smallest value of $\int_L\Omega_L$ amoung all $L$ representing any $\alpha\in H_n(\Omega)$. Equivalently (using the fact that extremal volume is rotation-invariant),
\begin{equation*}
 s(J,\Omega):=\inf\{\Len(\alpha,\Omega):\alpha\in H_n(\Omega)\}.
\end{equation*}

If $H_n(\Omega)$ is empty, we set $s(J,\Omega):=\infty$.

Let $\mathcal{M}$ be a fixed moduli space of integrable complex structures on $M$. We define the extremal complex systole of $\mathcal{M}$ as
\begin{equation*}
  \sigma_\mathcal{M}:=\sup_{(J,\Omega)}\frac{s(J,\Omega)^2}{\int_M\Omega_M},
 \end{equation*}
where $J\in\mathcal{M}$ and $\Omega$ is a closed $(n,0)$-form, not identically zero.
 \end{definition}

The quantities $s(J,\Omega)$, $\sigma_\mathcal{M}$ are thus the ``relaxed" forms of  $\Len(\alpha,\Omega)$, $\mu_\alpha$, with fewer restrictions on $\alpha$, $J$.

The logic underlying this definition is as follows. (i) In the proof of Loewner's theorem, $g$ corresponds to a pair $(J,\Omega)$. In this sense, $s(J,\Omega)$ is the higher-dimensional complex analogue of the Riemannian systole $s(g)$ and the inequality $\sigma_\mathcal{M}\leq c$ is the complex analogue of the Riemannian systolic inequality (\ref{eq:systolicineq}). (ii) Lemma \ref{lem:systole_positive} shows that the restriction to closed $\Omega$ ensures that $s(J,\Omega)$ is positive (rather than non-negative). (iii) Including different phases implies that the complex systole is rotation-invariant, as is true for the extremal volume. This allows comparisons between these two quantities. Lemma \ref{lemma:constantphase} guarantees that it suffices to concentrate on constant phases.

\begin{remark}
It follows from Remark \ref{rem:anti_holomorphic} that if $L$ is $\Omega$-special, then $-L$ is $(-\Omega)$-special.

Notice also that submanifolds which are STR with respect to different phases cannot belong to the same homology class. Indeed, assume there exist an $\Omega$-special submanifold $L$ and an $(e^{i\theta}\Omega)$-special submanifold $L'$ in the same homology class $\alpha$. Then $\int_L\Omega_L=\int_L\Omega=\int_{L'}\Omega=e^{-i\theta}\int_{L'}e^{i\theta}\Omega=e^{-i\theta}\int_{L'}\Omega_{L'}$. Since both $\int_L\Omega_L$ and $\int_{L'}\Omega_{L'}$ are real and positive, it follows that $e^{i\theta}=1$. 
\end{remark}

\begin{lem}\label{lem:systole_positive}
For any $(J,\Omega)$ as in Definition \ref{def:complexsystole}, the complex systole $s(J,\Omega)$ is strictly positive.
\end{lem}

\begin{proof} Since $H_n(M;\Z)$ is a finitely generated Abelian group, the subgroup generated by $H_n(\Omega)$ is also finitely generated. Let $\alpha_1,\dots,\alpha_m$ be generators of this subgroup. Since each $\alpha_j$ is a finite linear combination of elements in $H_n(\Omega)$, up to increasing the number of generators we may assume that each $\alpha_j\in H_n(\Omega)$. It follows that each $\alpha_j$ is represented by a $e^{i\theta_j}\Omega$-STR submanifold $L_j$. Rotation-invariance and Proposition \ref{prop:special_minimizes} show that $l(\alpha_j,\Omega)=l(\alpha_j,e^{i\theta_j}\Omega)=\int_{L_j}e^{i\theta_j}\Omega$, which is positive because $\Omega$ is holomorphic, thus has only isolated zeroes. The systole $s(J,\Omega)$ thus coincides with the minimum value of a finite set of positive numbers.
\end{proof}

\begin{remark} A variation of Definition \ref{def:complexsystole} might include in $H_n(\Omega)$ all classes $\alpha$ represented by a $e^{i\theta}\Omega$-special submanifold $L$, for some constant $\theta$, adding the additional condition $\int_L\Omega_L>0$. This variation includes submanifolds which are not necessarily TR but excludes complex submanifolds (which are trivially special). The analogue of Lemma \ref{lem:systole_positive} would again lead to a strictly positive systole.
\end{remark}

We can now reformulate Lemma \ref{l:polartori} as a complex systolic inequality, freeing it from any particular homology class or coordinate system.

\begin{thm}\label{thm:tori_systolicineq}
 Let $M=\Sph^1\times\dots\times\Sph^1$ be the smooth $2n$-dimensional torus. Let $\mathcal{M}$ be the moduli space of principally polarized complex structures on $M$. There exists $d=d(n)$ such that
 \begin{equation*}
  \sigma_\mathcal{M}\leq d.
 \end{equation*}
\end{thm}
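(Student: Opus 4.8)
The plan is to bound the extremal complex systole $\sigma_\mathcal{M}$ by reducing the supremum over all pairs $(J,\Omega)$ to the uniform upper bound on extremal volume already furnished by Lemma \ref{l:polartori}. The key observation is that for any fixed $(J,\Omega)$ with $\Omega$ closed, the systole $s(J,\Omega)$ is by definition the infimum of $\Len(\alpha,\Omega)$ over the cone $(\widetilde{C}_n^\Omega)^+$, so in particular $s(J,\Omega)\leq\Len(\alpha_0,\Omega)$ for \emph{any single} class $\alpha_0$ we can exhibit in that cone. Squaring and dividing by $\int_M\Omega_M$, this gives
\begin{equation*}
\frac{s(J,\Omega)^2}{\int_M\Omega_M}\leq\frac{(\Len(\alpha_0,\Omega))^2}{\int_M\Omega_M}\leq\mu_{\alpha_0},
\end{equation*}
exactly the mechanism used in the Loewner estimate (\ref{eq:loewner}). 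So the whole problem collapses to producing, for every $J\in\mathcal{M}$, one distinguished homology class $\alpha_0=\alpha_0(J)$ lying in $(\widetilde{C}_n^\Omega)^+$ whose extremal volume $\mu_{\alpha_0}$ is bounded by a constant $d=d(n)$ independent of $J$.

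First I would invoke Lemma \ref{l:polartori}: since $M$ is the smooth torus and $\mathcal{M}$ is the moduli space of principally polarized complex structures, each $J\in\mathcal{M}$ makes $(M,J)\cong\C^n/\Lambda$ a principally polarized complex torus, and the lemma supplies a TR subtorus $L$ whose class $\alpha_0\in H_n(M;\Z)$ satisfies $\mu_{\alpha_0}=1/\det(B)\leq d(n)$. Next I must verify that this $\alpha_0$ actually belongs to $(\widetilde{C}_n^\Omega)^+$ so that it is admissible in the definition of $s(J,\Omega)$. Because $L$ is TR it is $\Omega$-STR for the appropriate closed constant form $\Omega=dz^1\wedge\cdots\wedge dz^n$ (up to a constant rotation), and by Proposition \ref{prop:special_minimizes} it has $\int_L\Omega>0$; after orienting so that each connected component is genuinely $\Omega$-special with positive $\Omega$-volume, $\alpha_0$ lies in $(C_n^\Omega)^+\subseteq(\widetilde{C}_n^\Omega)^+$, confirming in particular that this cone is nonempty so that $s(J,\Omega)$ is finite. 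Then, taking the supremum over all $J\in\mathcal{M}$ and all closed admissible $\Omega$, the chain of inequalities above yields $\sigma_\mathcal{M}\leq d(n)$.

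The step I expect to be the main obstacle is the interplay between the supremum over $\Omega$ and the rotation-invariance built into the cone $(\widetilde{C}_n^\Omega)^+$. The quantity $\sigma_\mathcal{M}$ is a supremum over \emph{both} $J$ and the closed form $\Omega$, whereas the uniform bound from Lemma \ref{l:polartori} is phrased in terms of a single normalized form. I would need to check that the estimate $s(J,\Omega)^2/\int_M\Omega_M\leq\mu_{\alpha_0}$ is insensitive to which closed $\Omega$ is chosen, using that on a compact torus every holomorphic volume form is, up to a positive constant scaling and a constant phase rotation, equal to the standard $dz^1\wedge\cdots\wedge dz^n$ (as remarked at the start of Section \ref{s:example}), and that both $s(J,\Omega)^2$ and $\int_M\Omega_M$ scale identically under $\Omega\mapsto c\,\Omega$ while being invariant under rotation. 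The earlier lemmas guaranteeing rotation-invariance of the systole are precisely what make $\alpha_0$ remain a valid competitor across all phases, so the supremum over $\Omega$ contributes no additional growth and the bound $d=d(n)$ from Lemma \ref{l:polartori} passes through unchanged.
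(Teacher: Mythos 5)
Your proposal is correct and is essentially the paper's own argument: for each $J\in\mathcal{M}$ you take $\Omega=dz^1\wedge\cdots\wedge dz^n$, exhibit the class $\alpha$ of the STR subtorus from Lemma \ref{l:polartori} as an element of $(\widetilde{C}_n^{\Omega})^+$, bound $s(J,\Omega)\leq\int_L\Omega$, and conclude via $s(J,\Omega)^2/\int_M\Omega_M\leq\mu_\alpha\leq d(n)$. The only difference is presentational: you spell out the scale- and rotation-invariance reduction over all closed $\Omega$ (using that on a compact torus any holomorphic volume form is a constant multiple of the standard one), which the paper leaves implicit in the phrase ``we may choose $\Omega=dz^1\wedge\dots\wedge dz^n$''.
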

\begin{proof}
 Given any $J$ such that $(M,J)=\C^n/\Lambda$ admits a principal polarization, we may choose $\Omega=dz^1\wedge\dots\wedge dz^n$. Choose $\alpha$ as in Lemma \ref{l:polartori}, represented by an $\Omega$-STR subtorus $L$. By definition, $s(J,\Omega)\leq \int_L\Omega_L=\int_L\Omega$ so 
\begin{equation*}
 \frac{s(J,\Omega)^2}{\int_M\Omega_M}\leq \frac{(\int_L\Omega)^2}{\int_M\Omega_M}=\mu_\alpha\leq d,
 \end{equation*}
where $d$ is as in Lemma \ref{l:polartori}. Since this holds for all $J$, we obtain the desired result. 
\end{proof}

Roughly speaking this shows that, for all $J\in\mathcal{M}$ and all holomorphic volume forms, the complex volume of $M$ is uniformly controlled by the complex volume of its smallest STR submanifold. 

\begin{remark}
A notion of complex systole in the restricted, strongly Riemannian, context of Calabi-Yau manifolds appears in \cite{fan}, cf. Definition 1.1, where the author minimizes an analogue of the $\Omega$-volume only amoung special Lagrangian submanifolds. This definition is then used in \cite{haiden}, whose Theorem 4.2 is similar to our Theorem \ref{thm:tori_systolicineq} but relies on the special fact that any complex torus is a Calabi-Yau manifold. The main focus of these papers is orthogonal to ours: they study symplectic geometry, Fukaya categories and Bridgeland stability conditions.
\end{remark}

\bibliographystyle{amsplain}
\bibliography{capacity}

\end{document}